\newtheorem{theorem}{Theorem}
\newtheorem{proposition}[theorem]{Proposition}
\newtheorem{corollary}[theorem]{Corollary}
\theoremstyle{definition}
\newtheorem{definition}[theorem]{Definition}
\newtheorem{example}[theorem]{Example}
\theoremstyle{remark}
\newtheorem{remark}[theorem]{Remark}
\numberwithin{equation}{section}
\numberwithin{theorem}{section}
\newcommand\thref{Theorem \ref}
\newcommand\prref{Proposition \ref}
\newcommand\coref{Corollary \ref}
\newcommand\exref{Example \ref}
\newcommand\seref{Section \ref}
\renewcommand{\comment}[1]{}
\def\ii{\mathrm{i}} 
\def\vac{\boldsymbol{1}}
\def\CC{\mathbb{C}}
\def\ZZ{\mathbb{Z}}
\def\N{\mathcal{N}}
\def\W{\mathcal{W}}
\def\ph{\varphi}
\def\om{\omega}
\def\si{\sigma}
\def\lieh{\mathfrak{h}}
\def\hh{\hat{\lieh}}
\def\hhp{\hh_\ph}
\def\liea{\mathfrak{a}}
\def\caph{(C_{\liea})_\ph}
\def\al{\alpha}
\def\be{\beta}
\def\ga{\gamma}
\def\de{\delta}
\def\la{\lambda}
\def\ze{\zeta}
\newcommand{\h}{\mathfrak{h}}
\newcommand{\nop}[1]{\textnormal{{:}}#1\textnormal{{:}}}
\newcommand{\nopm}[1]{{}^\circ_\circ \hspace{-.7sp}#1 \hspace{-.5sp} {}^\circ_\circ}
\DeclareMathOperator\re{Re}
\DeclareMathOperator\im{Im}
\DeclareMathOperator\sdim{sdim}
\DeclareMathOperator\LF{LFie}
\DeclareMathOperator\Span{span}
\DeclareMathOperator\ind{Ind}
\DeclareMathOperator\aut{Aut}
\DeclareMathOperator\Aut{Aut}
\DeclareMathOperator\der{Der}
\DeclareMathOperator\res{Res}
\DeclareMathOperator\str{str}
\begin{document}

\title{Twisted logarithmic modules of free field algebras}
\author{Bojko Bakalov}
\address{Department of Mathematics, North Carolina State University, 
Raleigh, NC 27695, USA}
\email{bojko\_bakalov@ncsu.edu; smsulli4@ncsu.edu}

\author{McKay Sullivan}

\begin{abstract}
Given a non-semisimple automorphism $\ph$ of a vertex algebra $V$, the fields in a $\ph$-twisted $V$-module involve the logarithm of the formal variable, and the action of the Virasoro operator $L_0$ on such module is not semisimple. We construct examples of such modules and realize them explicitly as Fock spaces when $V$ is generated by free fields. Specifically, we consider the cases of symplectic fermions (odd superbosons), free fermions, and $\be\ga$-system (even superfermions).
In each case, we determine the action of the Virasoro algebra.
\end{abstract}

\thanks{The first author is supported in part by a Simons Foundation grant 279074} 

\thanks{Published in \textit{J. Math. Phys.} \textbf{57}, 061701 (2016)}

\date{April 1, 2016; Revised June 16, 2016}

\keywords{Fock space; free superbosons; free superfermions; Lie superalgebra; symplectic fermions; twisted module; vertex algebra; Virasoro algebra}

\subjclass[2010]{81R10, 17B69}

 \maketitle


\section{Introduction}

The notion of a \textit{vertex algebra} introduced by Borcherds \cite{Bo} provides a rigorous algebraic description of two-dimensional chiral conformal field theory \cite{BPZ, Go, DMS}, and is a powerful tool for studying representations of infinite-dimensional Lie algebras \cite{K1,KRR}. We will assume that the reader is familiar with the theory of vertex algebras as presented in \cite{K2} (see also \cite{FLM,FB,LL,KRR} for other sources).
Given an automorphism $\varphi$ of a  vertex algebra $V$, one considers the so-called \emph{$\varphi$-twisted $V$-modules} \cite{FLM,D,FFR,BK}, which are useful for constructing modules of the \emph{orbifold} subalgebra consisting of elements fixed by $\ph$ (see e.g.\ \cite{DVVV,KT,DLM}).
More recently, the notion of a $\varphi$-twisted module was generalized to the case when $\varphi$ is not semisimple \cite{H,Bak:2015}.
This generalization was motivated by logarithmic conformal field theory (see e.g.\ \cite{Kau1,AM,CR}) and applications to Gromov--Witten theory (cf.\ \cite{DZ,MT,BM,BW}).
The main feature of such modules is that the twisted fields involve the logarithm of the formal variable; for this reason we also call them \emph{twisted logarithmic modules}.

More precisely, let $z$ and $\ze$ be two independent formal variables. If we think of $\ze$ as $\log z$, then the derivatives with respect to $z$ and $\ze$ will become
\begin{equation}\label{dzdze}
D_z = \partial_z + z^{-1}\partial_\ze, \qquad D_\zeta = z \partial_z + \partial_\ze.
\end{equation}
For a vector space $W$ over $\CC$, a \emph{logarithmic (quantum) field} on $W$ is a linear map from $W$ to the space of formal series of the form
\begin{equation*}
\sum_{m\in S} \sum_{i=0}^\infty w_{i,m}(\ze)z^{i+m}, \qquad w_{i,m}(\ze) \in W[\ze],
\end{equation*}
for some finite subset $S$ of $\CC$. The space of all logarithmic fields is denoted $\LF(W)$.
Given a vertex algebra $V$ and an automorphism $\ph$ of $V$, a \emph{$\ph$-twisted $V$-module} is a vector space $W$, equipped with a linear map $Y\colon V \to \LF(W)$
satisfying certain axioms (see \cite{Bak:2015} for full details). 

The fields $Y(a)$ are usually written as $Y(a,z)$ for $a\in V$ (even though they also depend on $\ze$).
One of the axioms is the \emph{$\ph$-equivariance}
\begin{equation*}
Y(\ph a,z) = e^{2\pi\ii D_\ze}Y(a,z), \qquad a\in V.
\end{equation*}
As in \cite{Bak:2015}, we will assume that $\ph= \sigma e^{-2\pi\ii \N}$, where $\sigma \in \aut(V)$, $\N \in  \der(V)$, $\sigma$ and $\N$ commute, $\sigma$ is semisimple, and $\N$ is locally nilpotent (i.e., nilpotent on every $a\in V$).
We denote the eigenspaces of $\sigma$ by 
\begin{equation*}
V_\al = \{a \in V \,|\, \sigma a = e^{-2\pi\ii \al} a\}, \qquad \al \in \CC/\ZZ.
\end{equation*}
The $\ph$-equivariance implies that 
\begin{equation}\label{tfieldx}
X(a,z) = Y(e^{\zeta \N}a,z) = Y(a,z)\big|_{\zeta = 0}
\end{equation}
is independent of $\ze$, and the exponents of $z$ in $X(a,z)$ belong to $-\al$ for $a\in V_\al$.
For $m \in \al$, the \emph{$(m+\N)$-th mode} of $a\in V_\al$ is defined as
\begin{equation}\label{tmodes}
a_{(m+\N)} = \res_z z^m X(a,z),
\end{equation}
where, as usual, $\res_z$ denotes the coefficient of $z^{-1}$.
Then
\begin{equation}\label{tfield}
Y(a,z) = X(e^{-\zeta \N}a,z) = \sum_{m \in \al}(z^{-m-\N-1}a)_{(m+\N)},
\end{equation}
where we use the notation $z^{-\N}=e^{-\ze \N}$.
Notice that $Y(a,z)$ is a polynomial of $\ze$, since $\N$ is nilpotent on $a$.
The paper \cite{Bak:2015} develops the theory of twisted logarithmic modules and, in particular, proves a \emph{Borcherds identity} and \emph{commutator formula}
for the modes \eqref{tmodes}. 
It contains examples of modules in the cases when $V$ is an affine vertex algebra or a Heisenberg vertex algebra. 
The latter is also known as the \emph{free boson} algebra (see e.g.\ \cite{K2}).

In the present paper, we extend the results of \cite{Bak:2015} to the case of \emph{symplectic fermions}, which are odd super-analogs of the free bosons.
Historically, the symplectic fermions provided the first example of a logarithmic conformal field theory, due to Kausch \cite{Kau1,Kau2}.
An orbifold of the symplectic fermion algebra $SF$ 
under an automorphism of order $2$ has the important properties of being $C_2$-cofinite but not rational \cite{Abe:2007}.
More recently, the subalgebras of $SF$ 
known as the triplet and singlet algebras have generated considerable interest (see \cite{Kau3,AM}).
Other orbifolds of $SF$ give rise to interesting $\W$-algebras \cite{CL}.

We also consider the examples of \emph{free superfermions} (cf.\ \cite{K2}), which include in particular the \emph{free fermions} and the symplectic bosons
(also known as the bosonic ghost system or \emph{$\be\ga$-system}). 
The $\be\ga$-system provides another interesting model of logarithmic conformal field theory \cite{RW}.
Many important algebras have \emph{free-field realizations} by superfermions.
In particular, these include: affine Kac--Moody algebras \cite{F,KP,W,FF1,FF4}, affine Lie superalgebras \cite{KW1}, toroidal Lie algebras \cite{JM,JMX}, $\W$-algebras \cite{FF2,FF3,KRW}, superconformal algebras \cite{KRW,KW2}, the $\W_{1+\infty}$-algebra and its subalgebras \cite{KR,KWY,L}. 
If $V$ is one of the free-field algebras, $\ph\in\Aut(V)$, and $A\subset V$ is a subalgebra such that $\ph(A)\subset A$, then any $\ph$-twisted $V$-module gives rise to a $\ph$-twisted $A$-module by restriction. Moreover, such $A$-modules are untwisted if $\ph$ acts as the identity on $A$ (i.e., if $A\subset V^\ph$). Thus, we expect the twisted modules constructed in this paper to be useful for studying modules over subalgebras of free-field algebras.

The paper is organized as follows. In \seref{sfsb}, following \cite{K2}, we review the definition of free superbosons, which include the free bosons in the even case and the symplectic fermions $SF$ in the odd case. Then we outline the construction of $\ph$-twisted $SF$-modules as modules over $\ph$-twisted affine Lie superalgebras, similarly to the even case considered in \cite{Bak:2015}. In order to make the construction explicit, we need to solve a linear algebra problem, which we do in \seref{sautbil}. Using that, in \seref{ssfm}, we realize explicitly all highest-weight $\ph$-twisted $SF$-modules as certain Fock spaces and we determine the action of the Virasoro algebra on them. 
In particular, we confirm that the action of the Virasoro operator $L_0$ is not semisimple
(cf.\ \cite{Kau1,AM,CR}). In \seref{ssferm}, we study twisted logarithmic modules of free superfermions and determine the action of the Virasoro algebra. In the final Sections \ref{sec:ff} and \ref{sec:bgs}, we realize these modules explicitly as Fock spaces in the odd case (free fermions)
and the even case ($\be\ga$-system), respectively.

\section{Free superbosons}\label{sfsb}

In this section, we review the definition of free superbosons, following \cite{K2}. 
Let $\lieh=\lieh_{\bar0}\oplus\lieh_{\bar1}$ be an abelian Lie superalgebra with $\dim \lieh = d< \infty$. Let $(\cdot|\cdot)$ be a non-degenerate even supersymmetric bilinear form on $\lieh$. Thus, $(b|a) = (-1)^{p(a)p(b)}(a|b)$ and $(\lieh_{\bar0}|\lieh_{\bar1})=0$, where $p(a)$ denotes the parity of $a$.
Consider the Lie superalgebra given by the affinization $\hh=\h[t,t^{-1}]\oplus \CC K$  with commutation relations
\begin{equation}\label{sbbrackets}
[at^m,bt^n] = m \delta_{m,-n}(a|b)K, \qquad [\hh,K] = 0 \qquad (m,n \in \ZZ),
\end{equation}
and $p(at^m)=p(a)$, $p(K) = \bar{0}$.
We will use the notation $a_{(m)}=at^m$. The \emph{free superbosons}
\begin{equation*}
a(z) = \sum_{m\in \ZZ}a_{(m)}z^{-m-1}, \qquad a \in \lieh,
\end{equation*}
have OPEs given by 
\begin{equation*}
a(z)b(w) \sim \frac{(a|b)K}{(z-w)^2} \,.
\end{equation*}
The (generalized) Verma module 
\begin{equation*}
V = \ind_{\h[t]\oplus \CC K}^{\hat{\h}} \CC
\end{equation*}
is constructed by letting $\h[t]$ act trivially on $\CC$ and $K$ act as $1$. Then $V$ has the structure of a vertex algebra called the \textit{free superboson algebra} and denoted $B^1(\h)$.
The commutator \eqref{sbbrackets} is equivalent to the following $n$-th products:
\begin{align}\label{nthprodb}
a_{(0)}b &= 0, & a_{(1)}b &= (a|b)\vac, & a_{(j)}b &= 0 &&  (j \geq 2)
\end{align} 
for $a,b \in \h$, where $\vac$ is the vacuum vector in $B^1(\h)$.

Let $\ph$ be an even automorphism of $\h$ such that $(\cdot|\cdot)$ is $\ph$-invariant. As before, we write $\ph=\si e^{-2 \pi\ii\N},$ and denote the eigenspaces of $\sigma$ by 
\begin{equation*}
\h_\al = \{a \in \h \,|\, \sigma a = e^{-2\pi\ii \al} a\}, \qquad \al \in \CC/\ZZ.
\end{equation*}

\begin{definition}[cf. \cite{Bak:2015}]\label{def:twaff}
The \emph{$\ph$-twisted affinization} $\hhp$ is the Lie superalgebra spanned by an even central element $K$ and elements $a_{(m+\N)}=at^m$ $(a \in \h_\al, \, m \in \al)$ with parity $p(a_{(m+\N)}) = p(a)$, and the Lie superbracket
\begin{equation}\label{waffbrack}
[a_{(m+\N)},b_{(n+\N)}]=\delta_{m,-n}((m+\N)a|b)K
\end{equation}
for $a \in \h_\al$, $b \in \h_\beta$, $m \in \al,$ $n\in \beta$.
\end{definition}

An $\hhp$-module $W$ is called \textit{restricted} if for every $a \in \lieh_\al$, $m \in \al$, $v \in W$, there is an integer $L$ such that $(at^{m+i})v =0$ for all $i \in \ZZ,$ $i \geq L$. We note that every highest weight $\hhp$-module is restricted (see \cite{K1}).
The automorphism $\ph$ naturally induces automorphisms of $\hat{\h}$ and $B^1(\h)$, which we will denote again by $\ph$.  Then every $\ph$-twisted $B^1(\lieh)$-module is a restricted $\hhp$-module and, conversely, every restricted $\hhp$-module uniquely extends to a $\ph$-twisted $B^1(\lieh)$-module \cite[Theorem 6.3]{Bak:2015}. 

We split $\CC$ as a disjoint union of subsets $\CC^+$, $\CC^-=-\CC^+$ and $\{0\}$ where 
\begin{equation}\label{cplus}
\CC^+ = \{\gamma \in \CC \,|\, \re \gamma > 0 \} \cup \{ \gamma \in \CC \,|\, \re \gamma = 0, \, \im \gamma > 0\}. 
\end{equation}
Then the $\ph$-twisted affinization $\hat{\h}_\ph$ has a triangular decomposition 
\begin{equation}\label{wtriangle}
\hat{\h}_\ph = \hat{\h}_\ph^- \oplus \hat{\h}_\ph^0 \oplus \hat{\h}_\ph^+,
\end{equation}
 where
\begin{equation*}
\hat{\h}^\pm_\ph = \Span\{at^m \,|\, a \in \h_\al, \, \al \in \CC/\ZZ, \, m \in \al \cap \CC^\pm\}
\end{equation*}
and
\begin{equation*}
\hat{\h}_\ph^0 = \Span\{at^0 \,|\, a \in \h_0\}\oplus \CC K.
\end{equation*}
Starting from an $\hat{\h}^0_\ph$-module $R$ with $K=I$, the (generalized) \emph{Verma module} is defined by
\begin{equation*}
M_\ph(R) = \ind_{\hat{\h}_\ph^+\oplus \hat{\h}_\ph^0}^{\hat{\h}_\ph}R  ,
\end{equation*}
where $\hat{\h}_\ph^+$ acts trivially on $R$. These are $\ph$-twisted $B^1(\lieh)$-modules. 
In order to describe them more explicitly, first we will obtain canonical forms for all automorphisms $\ph$ of $\lieh$ preserving $(\cdot | \cdot )$.

\section{Automorphisms preserving a bilinear form}\label{sautbil}
In this section, we study automorphisms $\ph$ of a finite-dimensional vector space $V$ preserving a nondegenerate bilinear form $(\cdot | \cdot )$. Recall that $\ph$ can be written uniquely as $\ph = \sigma e^{-2 \pi\ii \N}$ where $\sigma$ is invertible and semisimple, $\N$ is nilpotent, and $\sigma$ and $\N$ commute. The $\ph$-invariance of $(\cdot | \cdot )$ is equivalent to:
\begin{equation}\label{inv}
(\sigma a | \sigma b) = (a | b), \qquad (\N a | b) + (a | \N b) = 0
\end{equation}
for all $a,b\in V$. We will assume that the bilinear form $(\cdot | \cdot )$ is either symmetric or skew-symmetric, and will consider these cases separately.

\subsection{The symmetric case}
The case when $(\cdot | \cdot )$ is symmetric was investigated previously in \cite[Section 6]{Bak:2015}. The classification of all $\si$ and $\N$ satisfying \eqref{inv} can be deduced from the well-known description of the canonical Jordan forms of orthogonal and skew-symmetric matrices over $\CC$ (see \cite{Ga:1959,HM:1999}). We include it here for completeness.
In the following examples, $V$ is a vector space with a basis $\{v_1,\ldots,v_d\}$ such that $(v_i|v_j)=\de_{i+j,d+1}$ for all $i,j$, and $\la=e^{-2\pi\ii\al_0}$ for some 
$\al_0 \in \CC$ such that $-1 < \re \al_0 \leq 0$.

\begin{example}[$d = 2\ell$]\label{ex:symm1}
\begin{align*}
\sigma v_i & = \begin{cases}
\lambda v_i, & 1\leq i \leq \ell,\\
\lambda^{-1}v_i & \ell+1 \leq i \leq 2\ell,
\end{cases} \qquad
\N v_i = \begin{cases}
v_{i+1}, & 1 \leq i \leq \ell-1,\\
-v_{i+1}, & \ell + 1 \leq i \leq 2\ell -1, \\
0, & i = \ell,2\ell.
\end{cases}
\end{align*}
The symmetry $v_i \mapsto (-1)^iv_{\ell+i}$, $v_{\ell+i} \mapsto (-1)^{i+\ell+1}v_i$ $(1\leq i \leq \ell)$ allows us to switch $\la$ with $\la^{-1}$ and assume that $\al_0 \in \CC^- \cup \{0\}$. 
If we write $\la^{-1}=e^{-2\pi\ii\be_0}$ where $-1 < \re \be_0 \leq 0$, then $\be_0 = -\al_0$ or $-\al_0-1$. Thus, after switching the roles of $\al_0$ and $\be_0$ if necessary, we may always assume that $-\frac{1}{2}\leq \re \al_0 \leq 0$ and $\im \al_0 \geq 0$ when $\re \al_0 =-\frac{1}{2}$.
\end{example}

\begin{example}[$d=2\ell-1$ and $\la=\pm 1$]\label{ex:symm2}
\begin{equation*}
\sigma v_i = \lambda v_i, \ \ 1 \leq i \leq 2\ell-1, \qquad \N v_i = \begin{cases}
(-1)^{i+1}v_{i+1}, &1 \leq i \leq 2\ell-2,\\
0, & i = 2\ell-1
\end{cases}
\end{equation*}
Since $\la=\pm 1$, it follows that $\al_0 =0$ or $-1/2$. 
\end{example}

\begin{remark}\label{rem:symm2}
After rescaling the basis vectors in \exref{ex:symm2}, the operator $\N$ can be written alternatively in the form
\begin{equation*}
\N v_i =  \begin{cases}
v_{i+1}, & 1 \leq i \leq \ell-1,\\
-v_{i+1}, & \ell \leq i \leq 2\ell -2, \\
0, & i = 2\ell-1,
\end{cases}
\end{equation*}
which more clearly shows the strong relationship between the symmetric and skew-symmetric cases 
(cf.\ \exref{ex:skew2} below).
\end{remark}

\begin{proposition}[\hspace{1sp}\cite{Bak:2015}]\label{prop:symm}
Let\/ $V$ be a finite-dimensional vector space, equipped with a nondegenerate symmetric bilinear form\/ $(\cdot|\cdot)$ and with commuting linear operators\/ $\sigma$, $\N$ satisfying \eqref{inv}, such that\/ $\sigma$ is invertible and semisimple, and\/ $\N$ is nilpotent. Then\/ $V$ is an orthogonal direct sum of subspaces that are as in Examples \ref{ex:symm1} and \ref{ex:symm2}.
\end{proposition}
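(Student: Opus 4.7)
The plan is to combine the eigenspace decomposition of $\si$ with the classical normal form for nilpotent skew-self-adjoint operators on a nondegenerate symmetric inner product space.

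First, since $\si$ is semisimple and commutes with $\N$, I write $V=\bigoplus_\la V_\la$, where $V_\la=\ker(\si-\la)$, and each $V_\la$ is $\N$-invariant. The relation $(\si a|\si b)=(a|b)$ gives $\la\mu(a|b)=(a|b)$ for $a\in V_\la$, $b\in V_\mu$, hence $(V_\la|V_\mu)=0$ unless $\mu=\la^{-1}$. Thus $V$ decomposes orthogonally into pieces of the form $V_\la\oplus V_{\la^{-1}}$ (when $\la^2\ne 1$) and self-paired pieces $V_{\pm 1}$; on each such piece $(\cdot|\cdot)$ remains nondegenerate and $\N$ remains nilpotent, so it suffices to classify each piece separately.

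For a pair with $\la\ne\la^{-1}$, the form gives a perfect pairing $V_\la\times V_{\la^{-1}}\to\CC$, and skew-adjointness says $\N|_{V_{\la^{-1}}}$ is the negative transpose of $\N|_{V_\la}$. Given a Jordan basis $v_1,\ldots,v_\ell$ of $V_\la$ for a single $\N$-Jordan block (so $\N v_i=v_{i+1}$ for $i<\ell$ and $\N v_\ell=0$), let $v_{2\ell},\ldots,v_{\ell+1}$ be the dual basis of $V_{\la^{-1}}$ normalized by $(v_i|v_{d+1-i})=1$ with $d=2\ell$. Skew-adjointness then forces $\N v_{\ell+i}=-v_{\ell+i+1}$ for $i<\ell$ and $\N v_{2\ell}=0$, reproducing the structure of \exref{ex:symm1}. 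Iterating over Jordan blocks of $\N|_{V_\la}$ settles this case.

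The main case is $\la=\pm1$, where the restriction of $(\cdot|\cdot)$ to $V_\la$ is nondegenerate symmetric and $\N|_{V_\la}$ is nilpotent and skew-self-adjoint. This is the classical normal-form problem referenced in \cite{Ga:1959,HM:1999}. I argue by induction on $\dim V_\la$: pick a vector $v$ generating a Jordan block of maximal size $k$, and set $m_n=(v|\N^n v)$. Skew-adjointness gives $(\N^i v|\N^j v)=(-1)^i m_{i+j}$, while symmetry $(\N^i v|\N^j v)=(\N^j v|\N^i v)$ forces $m_n=0$ for odd $n$. If $k$ is odd, then $k-1$ is even and a suitable choice of $v$ yields $m_{k-1}\ne 0$; after rescaling, the cyclic subspace $\Span\{v,\N v,\ldots,\N^{k-1}v\}$ is a nondegenerate $\N$-invariant orthogonal summand matching \exref{ex:symm2} (up to the basis change of \reref{rem:symm2}). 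If $k$ is even, symmetry forces $m_{k-1}=0$ for every such $v$, so the cyclic subspace of any single $v$ is degenerate; one then produces a second Jordan-block generator $w$ of size $k$ with $(v|\N^{k-1}w)\ne 0$, and $\Span\{\N^i v,\N^i w\}_{0\le i<k}$ is a nondegenerate $2k$-dimensional orthogonal summand of the form in \exref{ex:symm1} with $\la=\pm 1$. In either case, passing to the orthogonal complement and applying induction completes the proof.

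The hard part is the bookkeeping in the $\la=\pm 1$ case: specifically, showing that a generator $v$ with $m_{k-1}\ne 0$ exists when $k$ is odd, and that a suitable pairing partner $w$ exists when $k$ is even. Both facts come from analyzing the form $(u_1,u_2)\mapsto(u_1|\N^{k-1}u_2)$ induced on $\ker(\N^k)/\ker(\N^{k-1})$, which is symmetric when $k$ is odd and skew-symmetric when $k$ is even, and hence nondegenerate of the required type by the maximality of $k$. The reduction to $V_\la\oplus V_{\la^{-1}}$ summands is essentially formal; the substantive content sits entirely in the $\la=\pm 1$ normal form.
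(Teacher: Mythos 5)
Your proposal is correct and follows essentially the same route the paper takes: the paper proves the skew-symmetric analogue (\thref{thm:skew}) in detail and then remarks that \prref{prop:symm} is proved the same way, and your argument mirrors that proof --- eigenspace decomposition of $\si$ with $(V_\la|V_\mu)=0$ unless $\mu=\la^{-1}$, dual Jordan bases for the paired eigenspaces, and a parity-of-block-size dichotomy for $\la=\pm1$. Two small comments. First, your analysis of the induced form $(u_1,u_2)\mapsto(u_1|\N^{k-1}u_2)$ on $\ker(\N^k)/\ker(\N^{k-1})$ is actually a cleaner justification of the step the paper only asserts (``by a similar argument as above, we can assume that $V$ has the form $V'+V''$\dots''), since it explains exactly why odd-size blocks can be chosen self-paired and even-size blocks must pair off with distinct partners in the symmetric case. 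Second, the phrase ``after rescaling, the cyclic subspace \dots match[es] \exref{ex:symm2}'' undersells what is needed: rescaling the basis vectors $\N^i v$ cannot eliminate the Gram entries $(\N^iv|\N^jv)=(-1)^i m_{i+j}$ with $i+j<k-1$ even, so one must first adjust the generator, replacing $v$ by $v+\sum_{r\ge1}c_{2r}\N^{2r}v$ (and similarly normalize the partner $w$ in the even-$k$ case before concluding that $\Span\{\N^iv,\N^iw\}$ is nondegenerate). This is exactly the ``Gram--Schmidt process'' the paper carries out explicitly in the proof of \thref{thm:skew}; it is routine, but it should be stated, since without it the claimed match with the Examples (and the nondegeneracy of the $2k$-dimensional summand in the even case) is not yet established.
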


\subsection{The skew-symmetric case}\label{subsec:skclass}
In the case when $(\cdot | \cdot )$ is a nondegenerate skew-symmetric bilinear form, we were unable to locate in the literature an analogous classification of symplectic matrices.
Below we present two explicit examples of linear operators $\sigma$ and $\N$ satisfying \eqref{inv}.
Pick a basis $\{v_1,\ldots, v_{2\ell}\}$ for $V$ such that 
\begin{equation}\label{skewbas}
(v_i|v_j) = \delta_{i+j,2\ell + 1} = -(v_j|v_i), \qquad 
1 \leq i \leq j \leq 2\ell,
\end{equation}
and let $\lambda = e^{-2 \pi\ii\al_0}$ as before.

\begin{example}[$\ell$ is odd, or $\ell$ is even and $\lambda \neq \pm 1$]\label{ex:skew1}

\begin{align*}
\sigma v_i & =
\begin{cases}
  \lambda v_i, &1 \leq i \leq \ell,\\
  \lambda^{-1} v_i, &\ell +1 \leq i \leq 2 \ell,\\
\end{cases}
&
\mathcal{N} v_i  &=
  \begin{cases} 
      v_{i+1}, &1 \leq i \leq \ell -1, \\
     -v_{i+1}, &\ell+1 \leq i \leq 2\ell -1, \\
       0, & i = \ell, 2\ell.
  \end{cases}
\end{align*}
As in \exref{ex:symm1},
the symmetry $v_i \mapsto (-1)^{i}v_{\ell+i},$ $ v_{\ell+i}\mapsto (-1)^{\ell+i} v_i$ $(1 \leq i \leq \ell)$ allows us to assume that $\al_0 \in \CC^- \cup \{0\}$,
$-\frac{1}{2}\leq \re \al_0 \leq 0$, and $\im \al_0 \geq 0$ when $\re \al_0 =-\frac{1}{2}$.
\end{example}

We have omitted the case when $\ell$ is even and $\lambda=\pm 1$ in \exref{ex:skew1}, because it can be rewritten as an orthogonal direct sum of two copies of the following example.

\begin{example}[$\lambda = \pm 1$]\label{ex:skew2}

\begin{equation*}
\sigma v_i = \lambda v_i, \ \ 1 \leq i \leq 2\ell, \qquad
\mathcal{N} v_i  =
  \begin{cases} 
   v_{i+1},    & 1 \leq i \leq \ell, \\
   -v_{i+1},& \ell+1 \leq i \leq 2\ell -1, \\
   0, & i = 2\ell.
  \end{cases}
\end{equation*}
Since $\lambda = \pm 1$, we have $\al_0 = 0$ or $-1/2$.
\end{example}

\begin{theorem}\label{thm:skew}
Consider a finite-dimensional vector space\/ $V$ with a nondegenerate skew-symmetric bilinear form\/ $(\cdot | \cdot)$. Let\/ $\sigma$ and\/ $\mathcal{N}$ be commuting linear operators satisfying \eqref{inv}, such that\/ $\sigma$ is invertible and semisimple and\/ $\mathcal{N}$ is nilpotent. Then\/ $V$ is an orthogonal direct sum of subspaces as in Examples \ref{ex:skew1} and \ref{ex:skew2}.
\end{theorem}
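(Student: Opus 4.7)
The plan is to use the semisimplicity of $\si$ to reduce to an eigenspace-by-eigenspace analysis, and then handle the qualitatively different cases $\la\neq\pm1$ and $\la=\pm1$ separately. Since $\si$ is semisimple, $V=\bigoplus_\la V_\la$ with $V_\la=\ker(\si-\la)$. From $(\si a|\si b)=(a|b)$ I get $\la\mu(a|b)=(a|b)$ for $a\in V_\la$, $b\in V_\mu$, so $V_\la\perp V_\mu$ unless $\la\mu=1$; in particular the form restricts to a perfect pairing $V_\la\times V_{\la^{-1}}\to\CC$ and is nondegenerate on $V_{\pm1}$. Because $[\si,\N]=0$, each $V_\la$ is $\N$-invariant, and $\N$ stays skew-adjoint on every $\si$-invariant nondegenerate subspace, so both cases are set up for induction on dimension.

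For $\la\neq\pm1$, set $U=V_\la\oplus V_{\la^{-1}}$ and pick a Jordan block of $\N|_{V_\la}$ with cyclic generator $v_1$ of period $\ell$, so that $v_i:=\N^{i-1}v_1$ for $1\le i\le\ell$. Using the perfect pairing and the identity $(\N^iv_1|\cdot)=(-1)^i(v_1|\N^i\cdot)$, I build a matching chain $v_{\ell+1},\dots,v_{2\ell}\in V_{\la^{-1}}$ by solving triangular linear equations to achieve $(v_i|v_j)=\de_{i+j,2\ell+1}$ and $\N v_{\ell+i}=-v_{\ell+i+1}$, which is exactly \exref{ex:skew1}. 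I then pass to the orthogonal complement and iterate. The normalization of $\al_0$ to $-\tfrac12\le\re\al_0\le0$ is only the freedom to interchange $\la$ with $\la^{-1}$.

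For $\la=\pm1$, $V_\la$ itself carries a nondegenerate skew form with $\N$ nilpotent and skew-adjoint, and I induct on $\dim V_\la$. The key computation is that iterated skew-adjointness together with skew-symmetry gives
\begin{equation*}
(\N^iv|\N^jv)=(-1)^j(v|\N^{i+j}v)=-(-1)^i(v|\N^{i+j}v),
\end{equation*}
so $c_k:=(v|\N^kv)=0$ for every even $k$ and every $v\in V_\la$. Let $m$ be the largest Jordan block size of $\N$. If $m$ is even, a generic cyclic vector $v$ of period $m$ satisfies $(v|\N^{m-1}v)\neq0$ (otherwise $\N^{m-1}$ annihilates every cyclic generator, contradicting the maximality of $m$); the Gram matrix of $v,\N v,\dots,\N^{m-1}v$ is anti-triangular with nonzero anti-diagonal, hence nondegenerate, and a rescaling puts this $m$-dimensional cyclic subspace into the normal form of \exref{ex:skew2} with $2\ell=m$. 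If $m$ is odd, every single-cyclic piece is degenerate by the parity observation above, so I pick a second cyclic vector $w$ of period $m$ with $(v|\N^{m-1}w)\neq0$---such a $w$ must exist, else $\N^{m-1}V_\la$ would lie in the radical of the form. After adjusting $w\mapsto w+p(\N)v$ by a suitable polynomial to clear the cross-pairings $(\N^iv|\N^jw)$ of total degree below $m-1$ and rescaling, the combined $2m$-dimensional span takes the normal form of \exref{ex:skew1} with $\ell=m$ odd. In either subcase the resulting nondegenerate subspace is $\si$- and $\N$-invariant, so the inductive hypothesis applies to its orthogonal complement.

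The main obstacle I expect is the odd-$m$ branch of Step 3: I must verify that the second cyclic vector $w$ can be chosen so that simultaneously (i) the combined $2m$-dimensional subspace sits in the normal form of \exref{ex:skew1} exactly, not merely up to isomorphism, and (ii) its orthogonal complement retains a nondegenerate skew form with nilpotent skew-adjoint $\N$, so that the induction closes. This is essentially the classification of nilpotent orbits in $\mathfrak{sp}(V_{\pm1})$ via partitions in which every odd part has even multiplicity, and it explains why $\ell$ even with $\la=\pm1$ is excluded from \exref{ex:skew1}: such a configuration decomposes instead as two copies of \exref{ex:skew2} with parameter $\ell/2$.
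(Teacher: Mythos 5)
Your proposal is correct in outline and reaches the same conclusion, but in the case $\la=\pm1$ it takes a genuinely different route from the paper. The paper first asserts (by analogy with the $\la\neq\pm1$ case) that $V_{\pm1}$ can be written as $V'+V''$ with dual Jordan chains that are either distinct or equal; it then handles the distinct case by an explicit change of basis (splitting a dually-paired pair of even chains into two self-paired ones, i.e.\ two copies of \exref{ex:skew2}) and the self-paired case by a Gram--Schmidt process. You instead induct on dimension using the parity identity $(v|\N^kv)=0$ for $k$ even and case-split on the parity of the maximal Jordan block size $m$: even $m$ yields a single self-paired chain (\exref{ex:skew2}), odd $m$ forces two dually-paired chains (\exref{ex:skew1} with $\ell$ odd). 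This is the standard classification of nilpotent orbits in $\mathfrak{sp}_{2n}$ by partitions in which odd parts have even multiplicity, and it is conceptually cleaner: the parity identity explains a priori why odd blocks must pair up and why the configuration ``$\ell$ even, $\la=\pm1$'' is excluded from \exref{ex:skew1}, facts the paper only records after the fact. The price is that the grouping into dually-paired chains, which the paper takes as given, must be extracted from the induction, and nondegeneracy of each extracted piece must be verified so the orthogonal-complement step closes (which you do address); note also that your existence claim for $v$ with $(v|\N^{m-1}v)\neq0$ really rests on polarizing the symmetric form $B(u,w)=(u|\N^{m-1}w)$, not on $\N^{m-1}$ annihilating cyclic generators.

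One step is stated too glibly. In the even-$m$ branch, the Gram matrix of $v,\N v,\dots,\N^{m-1}v$ is anti-triangular but generally has nonzero entries above the anti-diagonal (e.g.\ $(v|\N^{m-3}v)$ need not vanish, since $m-3$ is odd), and no rescaling $v_i\mapsto c_iv_i$ can kill these: you must also modify the cyclic generator, $v\mapsto v+a_2\N^2v+a_4\N^4v+\cdots$, which is exactly the Gram--Schmidt step the paper performs explicitly. Likewise, in the odd-$m$ branch you must clear the self-pairings $(\N^iv|\N^jv)$ and $(\N^iw|\N^jw)$, not only the cross-pairings, since \exref{ex:skew1} requires each chain to span an isotropic subspace. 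Both adjustments are routine and of the same type you already invoke, so this is a detail to fill in rather than a flaw in the strategy.
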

\begin{proof}
Denote by $V_\la$ the eigenspaces of $\si$. Since the form $(\cdot|\cdot)$ is nondegenerate and $\si$-invariant, it gives isomorphisms $V_\la \cong (V_{\la^{-1}})^*$, while $V_\la \perp V_\mu$ for $\la \neq \mu^{-1}$. Hence, we can assume that $V = V_\la \oplus V_{\la^{-1}}$ $(\la \neq \pm 1)$ or $V=V_{\pm 1}$.

In the first case, pick a basis $\{w_1,\ldots,w_d\}$ for $V_\la$ in which $\N$ is in lower Jordan form. Then $V_{\la^{-1}}$ has a basis $\{w_{d+1},\ldots,w_{2d}\}$ such that $(w_i|w_j) = \de_{i+j,2d+1}$ for all $i< j$, and $V$ becomes an orthogonal direct sum of subspaces as in Example \ref{ex:skew1}.

Now assume $V= V_{\pm 1}$. By a similar argument as above, we can assume that $V$ has the form $V'+V''$ where $V'$ has a basis $\{w_1,\ldots,w_d\}$, and $V''$ has a basis $\{w_{d+1},\ldots,w_{2d}\}$ such that
\begin{equation}\label{nchain}
\N \colon w_1 \mapsto w_2 \mapsto \cdots \mapsto w_d \mapsto 0,
\end{equation}
and $(w_i|w_{d+j}) =\delta_{i+j,d+1}$ for all $1 \leq i,j\leq d$. However, $V'$ and $V''$ may not be distinct, and we must consider two cases.

First, suppose $V'$ and $V''$ are distinct, and hence $V = V'\oplus V''$. When $d$ is odd, $\ph$ acts on $V'\oplus V''$ as in Example \ref{ex:skew1}. When $d$ is even, we make the following change of basis:
\begin{align*}
u'_i &= 
\begin{cases}
  \hfill \frac{1}{\sqrt{2}}(w_i + (-1)^{i+1}w_{d+i}) \hfill & \text{ if } 1 \leq i \leq \frac{d}{2},\\
  \hfill \frac{1}{\sqrt{2}}((-1)^{i+1}w_i + w_{d+i})  \hfill &\text{
    if } \frac{d}{2} +1 \leq i \leq d,\\
\end{cases}
\\
u''_{i}&=
\begin{cases}
  \hfill \frac{1}{\sqrt{2}}(w_i + (-1)^{i}w_{d+i}) \hfill & \text{ if } 1 \leq i \leq \frac{d}{2},\\
  \hfill \frac{1}{\sqrt{2}}((-1)^{i}w_i + w_{d+i})  \hfill &\text{
    if } \frac{d}{2} +1 \leq i \leq d.\\
\end{cases}
\end{align*}
Then $U'=\Span\{u'_1,\ldots,u'_d\}$ and $U''=\Span\{u''_1,\ldots,u''_d\}$ are as in Example \ref{ex:skew2}, and $V=U'\oplus U''$ is an orthogonal direct sum.

Finally, consider the case when $V=V'=V''$. Then $d=2\ell$ is even, and $V$ has a basis $\{w_1,\ldots,w_d\}$ such that \eqref{nchain} holds. Note that \eqref{inv} and \eqref{nchain} imply that $(w_i|w_j)=0$ whenever $i+j>2\ell+1$. With the appropriate rescaling we may assume $(w_1|w_{2\ell})=1$. Then 
\begin{align}\label{diagonal}
(w_i|w_{2\ell-i+1}) = (-1)^{i+1}, \qquad 1 \leq i \leq 2\ell.
\end{align}
A Gram--Schmidt process allows us to construct a new basis $w'_1,\ldots,w'_{2\ell}$ such that \eqref{nchain} and \eqref{diagonal} still hold and $(w'_i|w'_j) = 0$ when $i + j < 2\ell+1$. Thus $(w'_i|w'_j) = (-1)^{i+1}\de_{i+j,2\ell+1}$ for all $i,j$. Rescaling the basis vectors, we see that $V$ is as in Example \ref{ex:skew2}.
\end{proof}

Note that \prref{prop:symm} can be proved similarly to \thref{thm:skew}.

\section{Symplectic fermions}\label{ssfm}
In this section, we will continue to use the notation from \seref{sfsb}. In the case when $\lieh$ is even ($\lieh = \lieh_{\bar{0}}$), the free superbosons are known simply as free bosons and $B^1(\lieh)$ is called the Heisenberg vertex algebra. Its twisted logarithmic modules were described in \cite[Section 6]{Bak:2015}.
Now we will assume that $\lieh$ is odd, i.e., $\lieh = \lieh_{\bar{1}}$.
In this case, $B^1(\lieh)$ is called the \textit{symplectic fermion algebra} and denoted $SF$ (see \cite{Kau1,Kau2,Abe:2007}). 
Then the bilinear form $(\cdot|\cdot)$ on $\lieh$ is skew-symmetric.
For $\lieh$ and $\ph$ as in Examples \ref{ex:skew1} and \ref{ex:skew2}, we will describe explicitly the $\ph$-twisted affinization $\hat{\h}_\ph$ and its irreducible highest-weight modules $M_\ph(R)$, together with the action of the Virasoro algebra on them.

\subsection{Action of the  Virasoro algebra}\label{sactvir}

Choose a basis $\{v_i\}$ for $\h$ satisfying \eqref{skewbas}, where $\ph$ acts either as in Example \ref{ex:skew1} or \ref{ex:skew2}. Let $v^i = v_{2\ell-i+1}$ and $v^{\ell+i}=-v_{\ell-i+1}$ $(1 \leq i \leq \ell)$. The basis $\{v^i\}$ is dual to $\{v_i\}$ with respect to $(\cdot|\cdot)$, so that $(v_i|v^j)=\de_{i,j}$. Then
\begin{equation}\label{sbomega}
\om = \frac{1}{2}\sum_{i=1}^{2\ell}v^i_{(-1)}v_i=\sum_{i=1}^\ell v^i_{(-1)}v_i \in B^1(\lieh)
\end{equation}
is a conformal vector
with central charge $c=\sdim\lieh = \dim \lieh_{\bar{0}}-\dim \lieh_{\bar{1}}$. 
Since $\ph \om = \om$, the modes of $Y(\om,z)$ give a (untwisted) representation of the Virasoro Lie algebra on every $\ph$-twisted $B^1(\lieh)$-module 
(cf. \cite[Lemma 6.8]{Bak:2015}). 

The triangular decomposition \eqref{wtriangle} induces the following normal ordering on the modes of $\hhp$:
\begin{equation}\label{nopm}
\nopm{(at^m)(bt^n)} = 
\begin{cases}
(at^m)(bt^n) & \text{if }m \in \CC^-,\\
(-1)^{p(a)p(b)}(bt^n)(at^m) &\text{if } m \in \CC^+\cup\{0\}.
\end{cases}
\end{equation}
On the other hand, the normally ordered product $\nop{Y(a,z)Y(b,z)}$ of two logarithmic fields is defined
by placing the part of $Y(a,z)$ corresponding to powers $z^\ga$ with $\re\ga<0$ to the right of $Y(b,z)$
(see \cite[Section 3.3]{Bak:2015}). The two normal orderings of the modes are different in general, as we will see in the proof of the next proposition.

\begin{proposition}\label{prop:bvirfield}
Assume\/ $\lieh$ and\/ $\ph$ are as in Example \ref{ex:skew1} or \ref{ex:skew2}. Then in every\/ $\ph$-twisted module of\/ $SF = B^1(\lieh)$, we have
\begin{equation}\label{sfLk}
L_k = \sum_{i=1}^\ell \sum_{m\in\al_0+\ZZ} \nopm{(v^it^{-m})(v_it^{k+m})}+\de_{k,0}\frac{\ell}{2}\al_0(\al_0+1)I.
\end{equation}
\end{proposition}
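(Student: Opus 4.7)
The strategy is to compute $Y(\om,z)$ from $\om=\sum_{i=1}^{\ell}v^i_{(-1)}v_i$ in \eqref{sbomega} and extract $L_k$ as the coefficient of $z^{-k-2}$. Since $\ph\om=\om$, the field $Y(\om,z)$ must be $\ze$-independent, which serves as a built-in consistency check. The starting point is the twisted-module version of the standard identity $Y(a_{(-1)}b,z)=\nop{Y(a,z)Y(b,z)}$ provided by the Borcherds identity of \cite{Bak:2015}, where the normally ordered product of logarithmic fields is defined by splitting the $z$-exponents according to the $\CC^\pm$ decomposition \eqref{cplus}.

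Next I would substitute the mode expansions from \eqref{tfield}: $Y(v^i,z)=\sum_{m\in\al_0+\ZZ}(z^{m-\N-1}v^i)_{(-m+\N)}$ and $Y(v_i,z)=\sum_{m\in\al_0+\ZZ}(z^{-m-\N-1}v_i)_{(m+\N)}$. Each factor $z^{\pm\N}=e^{\pm\ze\N}$ is polynomial in $\ze$; the $\ze$-dependence must cancel after summation over $i$, by $\ph$-invariance of $\om$. Extracting the coefficient of $z^{-k-2}$ then yields a bilinear in the modes $v^it^{-m}$ and $v_it^{k+m}$, field-ordered according to whether the $z$-exponent from the $v^i$-part lies in $\CC^-$ or in $\CC^+\cup\{0\}$.

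The third step is to convert this field ordering into the mode ordering $\nopm{}$ of \eqref{nopm}, which depends only on $m$ rather than on the $\N$-shifted exponent. The two orderings differ on only finitely many terms, namely those for which the $\N$-shift moves the exponent across the boundary of $\CC^\pm$, together with the zero mode when $\al_0=0$. For each such swap, the graded commutator \eqref{waffbrack} produces a scalar correction $\de_{k,0}((-m+\N)v^i|v_i)$ involving the skew-symmetric pairing and the Jordan-chain action of $\N$ from Examples \ref{ex:skew1} and \ref{ex:skew2}.

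\textbf{Main obstacle.} The central task is to show that the finite sum of these scalar corrections collapses to exactly $\de_{k,0}\,\tfrac{\ell}{2}\al_0(\al_0+1)I$. The $\de_{k,0}$ is automatic from \eqref{waffbrack}, and the absence of any residual $\ze$-dependence is forced by $\ph$-invariance of $\om$; the quadratic shape $\al_0(\al_0+1)$ should emerge by telescoping the $-m(v^i|v_i)$ and $(\N v^i|v_i)$ contributions across the finite set of indices $m\in\al_0+\ZZ$ with $\re m\in[-1,0]$, using the explicit Jordan chains in Examples \ref{ex:skew1} and \ref{ex:skew2}. A useful cross-check is to evaluate $L_0$ on the highest-weight vector of $M_\ph(R)$: all positive modes annihilate it, so the claimed scalar $\tfrac{\ell}{2}\al_0(\al_0+1)$ must agree with the conformal weight of the twisted ground state.
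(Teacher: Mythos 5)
Your overall scaffolding (compute $Y(\om,z)$ via a twisted normally-ordered product of the generating fields, then pass to the mode ordering $\nopm{\;}$) matches the paper's, but there is a genuine gap in where you locate the scalar correction, and as written your plan would produce the wrong constant. You attribute the entire term $\frac{\ell}{2}\al_0(\al_0+1)$ to commutators arising from swapping the finitely many modes on which the field ordering and the mode ordering \eqref{nopm} disagree. That cannot work: for each $i$ there is at most \emph{one} such mode (namely $v^it^{\be_0}$ when $\be_0=-\al_0-1\in\CC^-$), and each swap contributes via \eqref{waffbrack} a quantity \emph{linear} in $\al_0$ (the $(\N v^i|v_i)$ part vanishes on the Jordan chains of Examples \ref{ex:skew1}--\ref{ex:skew2}), so no amount of telescoping over this finite set yields the quadratic $\frac12\al_0(\al_0+1)$. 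The failure is starkest when $\re\al_0=0$ with $\al_0\neq 0$: there the two orderings agree, so your mechanism produces no correction at all, yet the true constant $\frac{\ell}{2}\al_0(\al_0+1)$ is nonzero.

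The missing ingredient is that in the twisted logarithmic setting the identity $Y(a_{(-1)}b,z)=\nop{Y(a,z)Y(b,z)}$ itself acquires correction terms. By \cite[Lemma 5.8]{Bak:2015}, $\nop{Y(a,z)Y(b,z)}$ equals $\sum_j z^{-j-1}Y\bigl(\bigl(\binom{\mathcal{S}+\N}{j+1}a\bigr)_{(j)}b,z\bigr)$, and because of the nonzero second product $a_{(1)}b=(a|b)\vac$ in \eqref{nthprodb} this produces, besides $Y(a_{(-1)}b,z)$, a scalar term $z^{-2}\bigl(\binom{\mathcal{S}+\N}{2}v^i\big|v_i\bigr)I$. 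Using $(\N v^i|v_i)=0$ and skew-symmetry, this is exactly the quadratic piece $z^{-2}\binom{\be_0}{2}I$ per index $i$; this is the paper's equation \eqref{genomfield}. The reordering you describe then contributes only the residual linear piece: nothing when $\be_0\in\CC^+\cup\{0\}$, and a single commutator giving $\be_0 I$ per index $i$ when $\be_0\in\CC^-$, whence $\binom{\be_0}{2}+\be_0=\frac12\al_0(\al_0+1)$ in both cases. Your final sanity check (evaluating $L_0$ on the highest-weight vector) is sound, but the derivation must route the quadratic term through the twisted quasi-associativity correction, not through mode commutators.
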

\begin{proof}
Assume $v_1,\ldots,v_\ell \in \lieh_\al$ and $v_{\ell+1},\ldots,v_{2\ell} \in \lieh_\beta$, where $\be=-\al$. Using \cite[Lemma 5.8]{Bak:2015}, (\ref{nthprodb}), the skew-symmetry of $(\cdot|\cdot)$, and the fact that $(\mathcal{N}v^i|v_i)=0$ for $1 \leq i \leq \ell$, we obtain
\begin{equation}\label{genomfield}
Y(\om,z) = \sum_{i=1}^\ell\nop{X(v^i,z)X(v_i,z)}+z^{-2}\ell \binom{\be_0}{2}I.
\end{equation}
If $\re \al_0=0$, then $\be_0=-\al_0 \in \CC^+\cup\{0\}$. So the normal ordering in \eqref{genomfield} coincides with $\eqref{nopm}$, and $\binom{\be_0}{2} =\frac{1}{2}\al_0(\al_0+1).$
Thus $L_k$ is given by \eqref{sfLk}.

Now assume $\re \al_0 <0$. Then $\be_0 = -\al_0-1 \in \CC^-$, and the ordering of the modes in \eqref{genomfield} differs from \eqref{nopm} when $k=0$ for
\begin{equation*}
-(v_it^{-\be_0})(v^it^{\be_0})=\nopm{(v^it^{\be_0})(v_it^{-\be_0})}+\be_0I.
\end{equation*}
Finally, we note that $\be_0+\binom{\be_0}{2} = \frac{1}{2}\al_0(\al_0+1)$. Thus after reordering to match (\ref{nopm}), $L_k$ is given by \eqref{sfLk}.
\end{proof}

\begin{remark}\label{rem:bvir}
Similarly, when $\h$ is even and $\ph$ is as in Example \ref{ex:symm1}, we have
\begin{equation}\label{bLk1}
L_k = \sum_{i=1}^\ell \sum_{m\in\al_0+\ZZ} \nopm{(v^it^{-m})(v_it^{k+m})}-\de_{k,0}\frac{\ell}{2}\al_0(\al_0+1)I
\end{equation}
in any $\ph$-twisted $B^1(\lieh)$-module. In the case of Example \ref{ex:symm2}, we have
\begin{equation}\label{bLk2}
L_k = \frac12\sum_{i=1}^d \sum_{m\in\al_0+\ZZ} \nopm{(v^it^{-m})(v_it^{k+m})}-\de_{k,0}\frac{d}{4}\al_0(\al_0+1)I.
\end{equation}
These results agree with \cite[Section 6]{Bak:2015}.
\end{remark}

Note that the normal orderings in \eqref{sfLk}, \eqref{bLk1}, \eqref{bLk2} can be omitted for $k\ne0$. In the following subsections, we will compute explicitly the actions of $\hhp$ and $L_0$ on $M_\ph(R)$ when $\h$ is odd as in Examples \ref{ex:skew1}, \ref{ex:skew2}.

\subsection{The case of {E}xample \ref{ex:skew1}} \label{sfcase1}

Recall that for any $\ph$-twisted $SF$-module, the logarithmic fields are given by \eqref{tfield}. 
Assume that $\N$ acts on $\lieh$ as in Example \ref{ex:skew1}.
Since this action is the same as in Example \ref{ex:symm1}, the logarithmic fields $Y(v_j,z)$ are the same as in \cite [Section 6.4]{Bak:2015}:
\begin{equation}\label{tfields1}
\begin{split}
Y(v_j,z)& = \sum_{i=j}^\ell\sum_{m\in\al_0+\ZZ} \frac{(-1)^{i-j}}{(i-j)!} \ze^{i-j} (v_it^m) z^{-m-1},\\
Y(v_{\ell+j},z)& = \sum_{i=j}^\ell\sum_{m\in-\al_0+\ZZ} \frac1{(i-j)!} \ze^{i-j} (v_{\ell+i}t^m) z^{-m-1}, 
\end{split}
\end{equation}
for $1\leq j\leq\ell$.

The Lie superalgebra $\hat{\h}_\ph$ is spanned
by an even central element $K$ and odd elements $v_it^{m+\al_0}$, $v_{\ell+i}t^{m-\al_0}$ ($1 \leq i \leq \ell$, $m\in \ZZ$), where $\al_0 \in \CC^-\cup \{0\}$ and $-1<\re \al_0\leq 0$.  By  \eqref{waffbrack}, the only nonzero brackets in $\hhp$ are given by:
\begin{equation}\label{sfbrack1}
[v_it^{m+\al_0},v_jt^{n-\al_0}]=(m+\al_0)\delta_{m,-n}\delta_{i+j,2\ell+1}K+ \delta_{m,-n} 
\delta_{i+j,2\ell}K,
\end{equation}
for $1 \leq i \leq \ell$, $\ell+1 \leq j \leq 2 \ell$, $m,n \in \ZZ$. 
Notice that the elements of $\hhp^-$ act as creation operators on $M_\ph(R)$.
Throughout the rest of the section, we will represent them as anti-commuting variables as follows:
\begin{equation}\label{xi}
v_it^{-m+\al_0} = \xi_{i,m},  \qquad v_jt^{-n-\al_0} = \xi_{j,n}, 
\end{equation}
for $1\leq i\leq\ell$, $\ell+1\leq j \leq 2\ell$, and $m \geq 0$, $n\geq 1$. 

The precise triangular decomposition \eqref{wtriangle} depends on whether $\alpha_0 \in \CC^-$ or $\alpha_0=0$.
Suppose first that $\al_0 \in \CC^-$. Then $\hhp^0=\CC K$ and $R = \CC$. Equations \eqref{sfbrack1} and \eqref{xi} imply that 
\begin{equation}\label{sfverma1}
M_\ph(R) \cong \bigwedge(\xi_{i,m},\xi_{\ell+i,m+1})_{1\leq i \leq \ell,\, m=0,1,2,\ldots}.
\end{equation}
Using the commutation relations \eqref{sfbrack1} and the fact that $\hhp^+R = 0$, we obtain the action of $\hhp^+$ on $M_\ph(R)$:
\begin{equation*}
\begin{split}
v_it^{m+\al_0}&=(m+\al_0)\partial_{\xi_{2\ell-i+1,m}}+(1-\delta_{i,\ell})\partial_{\xi_{2\ell-i,m}},\\
v_{\ell+i}t^{n-\al_0}&=-(n-\al_0)\partial_{\xi_{\ell-i+1,n}}+(1-\delta_{i,\ell})\partial_{\xi_{\ell-i,n}}, 
\end{split}
\end{equation*}
where $1 \leq i \leq \ell$, $m\geq 1$, $n \geq 0$. By Proposition \ref{prop:bvirfield}, the action of $L_0$ is
\begin{equation}\label{SFL1}
\begin{split}
L_0 =&\sum_{i=1}^\ell\sum_{m =0}^\infty \xi_{i,m}\Big((m-\al_0)\partial_{\xi_{i,m}}-(1-\delta_{i,1})\partial_{\xi_{i-1,m}}\Big) \\
&\quad +\sum_{i=1}^\ell\sum_{m =1}^\infty  \xi_{\ell+i,m}\Big((m+\al_0)\partial_{\xi_{\ell+i,m}}+(1-\delta_{i,1})\partial_{\xi_{\ell+i-1,m}}\Big) \\
&\quad +\frac{\ell}{2}\al_0(\al_0+1)I. 
\end{split}
\end{equation}

Now we consider the case when $\al_0=0$. Then $\hhp^0  = \Span\{v_it^0\}_{1 \leq i \leq 2\ell}\oplus\CC K$. 
We let 
\begin{equation*}
R= \bigwedge(\xi_{i,0},\xi_{2\ell,0})_{1\leq i \leq \ell},
\end{equation*}
where the action of $\hhp^0$ on $R$ is given by
\begin{equation}\label{ract1}
\begin{split}
v_it^0 &= \xi_{i,0}, \qquad\quad 1 \leq i \leq \ell \;\text{ or }\; i=2\ell, \\
v_{j}t^0 &= \partial_{\xi_{2\ell-j,0}}, \qquad \ell+1 \leq j \leq 2\ell-1.
\end{split}
\end{equation}
Therefore, by \eqref{xi},
\begin{equation}\label{sfverma2}
M_\ph(R) \cong \bigwedge(\xi_{i,m},\xi_{2\ell,m},\xi_{j,m+1})_{1\leq i \leq \ell,\,\ell+1\leq j \leq 2\ell-1, \,m=0,1,2,\ldots},
\end{equation}
where the action of $\hhp^+$ is given by 
\begin{align*}
v_it^m & = m \partial_{\xi_{2\ell-i+1,m}}+(1-\delta_{i,\ell})\partial_{\xi_{2\ell-i,m}},\\
v_{\ell + i}t^m & =-m\partial_{\xi_{\ell-i+1,m}}+(1-\delta_{i,\ell})\partial_{\xi_{\ell-i,m}},
\end{align*}
for $1 \leq i \leq \ell$ and $m\geq 1$.  The action of $L_0$ is
\begin{align*}
L_0 = \sum_{i=1}^\ell & \sum_{m=0}^\infty \xi_{i,m}\Big(m\partial_{\xi_{i,m}}-(1-\delta_{i,1})\partial_{\xi_{i-1,m}}\Big)\\ &+\sum_{i=1}^\ell\sum_{m=1}^\infty
\xi_{\ell+i,m}\Big(m\partial_{\xi_{\ell+i,m}}+(1-\delta_{i,1})\partial_{\xi_{\ell+i-1,m}}\Big) 
 -\xi_{1,0}\xi_{2\ell,0}.
\end{align*}

 \subsection{The case of {E}xample \ref{ex:skew2}}

Let $\lieh$ be as in Example \ref{ex:skew2}. 
Then, by \eqref{tfield}, in any $\ph$-twisted $SF$-module,
\begin{equation}\label{tfields2}
\begin{split}
Y(v_j,z)  = &\sum_{i=j}^\ell\sum_{m \in\al_0 +\ZZ} \frac{(-1)^{i-j}}{(i-j)!} \ze^{i-j} (v_it^m)z^{-m-1} \\
&+  (-1)^{j-\ell+1}\sum_{i=\ell+1}^{2\ell}\sum_{m \in \al_0+\ZZ} \frac1{(i-j)!} \ze^{i-j} (v_it^m)z^{-m-1}, \\
Y(v_{\ell+j},z) = & \sum_{i=j}^\ell\sum_{m\in\al_0 +\ZZ}\frac1{(i-j)!} \ze^{i-j}  (v_{\ell+i} t^m) z^{-m-1},
\end{split} 
\end{equation}
for $1 \leq j \leq \ell$ and $\al_0 = 0$ or $-1/2$.

The Lie superalgebra $\hhp$ is spanned by an even central element $K$ and odd elements $v_it^m$ $(1 \leq i \leq 2\ell, m \in \al_0 + \ZZ)$. The brackets in $\hhp$ are:
\begin{equation*}
\begin{split}
[v_it^m,v_jt^n] & = m\de_{m,-n}\de_{i+j,2\ell+1}K +\de_{m,-n}(1-2\de_{i,\ell})\de_{i+j,2\ell}K,\\
[v_{\ell+i}t^m,v_jt^n]&= -m\de_{m,-n}\de_{i+j,\ell+1}K +\de_{m,-n}\de_{i+j,\ell}K, 
\end{split}
\end{equation*}
for $1 \leq i \leq \ell$, $1 \leq j \leq 2\ell$, $m,n \in \al_0+\ZZ$. 
To determine explicitly $\hhp^0$, we need to consider separately the cases
$\al_0=0$ or $-1/2$.

First, we assume $\al_0=0$. Then $\hhp^0  = \Span\{v_it^0\}_{1 \leq i \leq 2\ell}\oplus\CC K$. We let
\begin{align*}
R= \bigwedge(\xi_{i,0},\xi_{2\ell,0})_{1\leq i \leq \ell} \qquad \Bigl(\text{where } \xi_{\ell,0}^2 = -\frac12\Bigr),
\end{align*}
with the action of $\hhp^0$ given by  \eqref{ract1}. Again, we will let the creation operators from $\hhp^-$ act by \eqref{xi}.
Thus $M_\ph(R)$ is again as in  \eqref{sfverma2} but with $\xi_{\ell,0}^2 = -1/2$.
The action of $\hhp^+$ on $M_\ph(R)$ is given by 
\begin{align*}
v_it^m & = m \partial_{\xi_{2\ell-i+1,m}}+(1-2\delta_{i,\ell})\partial_{\xi_{2\ell-i,m}}, \\
v_{\ell + i}t^m & = -m \partial_{\xi_{\ell-i+1,m}}+(1-\delta_{i,\ell})\partial_{\xi_{\ell-i,m}},
\end{align*}
for $1 \leq i \leq \ell$, $m\geq1$. The action of $L_0$ is
\begin{align*}
L_0 = \sum_{i=1}^\ell & \sum_{m=0}^\infty \xi_{i,m}\Big(m\partial_{\xi_{i,m}}-(1-\delta_{i,1})\partial_{\xi_{i-1,m}}\Big)\\ &+\sum_{i=1}^\ell\sum_{m=1}^\infty
\xi_{\ell+i,m}\Big(m\partial_{\xi_{\ell+i,m}}+(1-2\delta_{i,1})\partial_{\xi_{\ell+i-1,m}}\Big) 
 -\xi_{1,0}\xi_{2\ell,0}.
\end{align*}

Second, we consider the case when $\al_0 = -1/2$. 
Then $\hhp^0 = \CC K$. We represent the elements of $\hhp^-$ on $M_\ph(R)$ as  $v_it^{-m-1/2}=\xi_{i,m}$ for $1 \leq i \leq 2\ell$ and $m \geq 0$. Then
\begin{equation*}
M_\ph(R) \cong \bigwedge(\xi_{i,m})_{1\leq i \leq 2\ell,\, m=0,1,2,\ldots}.
\end{equation*}
The action of $\hhp^+$ on $M_\ph(R)$ is
\begin{align*}
v_it^{m+1/2} & = \Big(m+\frac{1}{2}\Big) \partial_{\xi_{2\ell-i+1,m}}+(1-2\de_{i,\ell})\partial_{\xi_{2\ell-i,m}},\\
v_{\ell+i}t^{m+1/2} & = -\Big(m+\frac{1}{2}\Big) \partial_{\xi_{\ell-i+1,m}}+(1-\de_{i,\ell})\partial_{\xi_{\ell-i,m}},\\
\end{align*}
for $1 \leq i \leq \ell$, $m\geq 0$. The action of $L_0$ is
\begin{align*}
L_0 = \sum_{i=1}^\ell & \sum_{m=0}^\infty \xi_{i,m}\Big(\Big(m+\frac{1}{2}\Big)\partial_{\xi_{i,m}}-(1-\delta_{i,1})\partial_{\xi_{i-1,m}}\Big)\\ &+\sum_{i=1}^\ell\sum_{m=0}^\infty
\xi_{\ell+i,m}\Big(\Big(m+\frac{1}{2}\Big)\partial_{\xi_{\ell+i,m}}+(1-2\delta_{i,1})\partial_{\xi_{\ell+i-1,m}}\Big) 
 -\frac{\ell}{8}I.
\end{align*}

\begin{remark}\label{triplet}
Let $\dim \lieh=2$ in \exref{ex:skew2}.
The \textit{triplet algebra} $\mathfrak{W}(1,2) \subset SF$ is generated by the elements (see \cite{Kau3,CR}):
\begin{equation*}
W^+ = -v_{1(-2)}v_1, \quad W^0 = -v_{1(-2)}v_2 - v_{2(-2)}v_1, \quad W^- =-v_{2(-2)}v_2.
\end{equation*}
Then $\si=I$ on $\mathfrak{W}(1,2)$ and 
$\N \colon W^+ \mapsto W^0 \mapsto 2W^- \mapsto 0$.
Hence, the restriction of any $\ph$-twisted module of $SF$ to $\mathfrak{W}(1,2)$ is a $\ph$-twisted module of $\mathfrak{W}(1,2)$, in which
$Y(W^-,z)$ is independent of $\ze$ while the fields $Y(W^+,z)$ and $Y(W^0,z)$ are logarithmic. 
\end{remark}

\section{Free superfermions}\label{ssferm}
In this section, we study twisted logarithmic modules of the free superfermion algebras.
First, let us review the definition of free superfermions given in \cite{K2}. Let $\liea$ be an abelian Lie superalgebra with $\dim \liea = d < \infty$, and $(\cdot|\cdot)$ be a nondegenerate even anti-supersymmetric bilinear form on $\liea$. Thus $(b|a)=-(-1)^{p(a)p(b)}(a|b)$ and $(\h_{\bar0}|\h_{\bar1})=0$. The \emph{Clifford affinization} of $\liea$ is the Lie superalgebra
\begin{equation*}
C_{\liea} = \liea[t,t^{-1}]\oplus \CC K
\end{equation*}
with commutation relations
\begin{equation}\label{sfbrackets}
[a t^m,b t^n] = (a|b)\delta_{m,-n-1}K,  \qquad [C_{\liea},K]=0
\end{equation}
for $m,n \in \ZZ$, where $p(at^m)=p(a)$ and $p(K)=\bar0$.
The \emph{free superfermions}
\begin{equation*}
a(z) = \sum_{m\in \ZZ}a_{(m)}z^{-m-1}, \qquad a_{(m)}=at^m,
\end{equation*}
have OPEs given by
\begin{equation*}
a(z)b(w) \sim \frac{(a|b)K}{z-w}.
\end{equation*}

The (generalized) Verma module 
\begin{equation*}
V = \ind_{\liea[t]\oplus \CC K}^{C_{\liea}} \CC
\end{equation*}
is constructed by letting $\liea[t]$ act trivially on $\CC$ and $K$ act as 1. Then $V$ has the structure of a vertex algebra called the 
\textit{free superfermion algebra} and denoted $F^1(\liea)$.
The brackets \eqref{sfbrackets} are equivalent to the following $n$-th products in $F^1(\liea)$:
\begin{equation}\label{fnprod}
a_{(0)}b = (a|b)\vac, \qquad a_{(j)}b = 0 \quad (j \geq 1),
\end{equation}
where $\vac$ is the vacuum vector.
In the even case ($\liea=\liea_{\bar0}$), the free superfermions are also known as symplectic bosons or as the bosonic ghost system (\emph{$\be\ga$-system}). 
In the odd case ($\liea=\liea_{\bar1}$), they are just called \emph{free fermions}.

\subsection{Twisted logarithmic modules of $F^1(\liea)$} 

Letting $\ph$ be an automorphism of $\liea$ such that $(\cdot|\cdot)$ is $\ph$-invariant, we write as before $\ph = \sigma e^{-2\pi\ii\N}$, and denote the eigenspaces of $\sigma$ by 
\begin{equation*}
\liea_\al = \{a \in \liea\, | \,\sigma a = e^{-2 \pi\ii \al}a \}, \qquad \al \in \CC/\ZZ.
\end{equation*}

\begin{definition}\label{def:tcaff}
The \emph{$\ph$-twisted Clifford affinization} $\caph$ is the Lie superalgebra spanned by elements $a t^m$ $(a \in \liea_\al, m \in \al)$ with $p(at^m)=p(a)$ and an even central element $K$.
The Lie superbracket in $\caph$ is given by
\begin{equation}\label{caffbrack}
[a t^m,b t^n] = \delta_{m,-n-1}(a|b)K, \qquad [K,a t^m] = 0,
\end{equation}
for $a \in \liea_\al$, $b \in \liea_\be$, $m\in \al$, $n\in \be$.
\end{definition} 

\begin{remark}\label{rem:tcaff}
Since the brackets \eqref{caffbrack} do not depend on $\N$, we have $\caph = (C_{\liea})_\si$.
In particular, $\caph = C_{\liea}$ if $\ph = e^{-2\pi\ii\N}$.
\end{remark}

As in the case of superbosons, $\ph$ naturally induces automorphisms of $C_{\liea}$ and $F^1(\liea)$. 
As before, a $\caph$-module $W$ will be called \textit{restricted} if for every $a \in \liea_\al$, $m \in \al$, $v \in W$, there is an integer $L$ such that $(at^{m+i})v =0$ for all $i \in \ZZ,$ $i \geq L$. 

\begin{theorem}\label{thm:tcaff}
Every\/ $\ph$-twisted\/ $F^1(\liea)$-module\/ $W$ has the structure of a restricted\/ $\caph$-module with\/ 
$(a t^m)v=a_{(m+\N)}v$ for\/ $a \in \liea_\al$, $m \in \al$, $v \in W$.
Conversely, every restricted\/ $\caph$-module uniquely extends to a\/ $\ph$-twisted\/ $F^1(\liea)$-module.
\end{theorem}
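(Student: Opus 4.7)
The strategy mirrors that of \cite[Theorem 6.3]{Bak:2015} for the free superboson algebra $B^1(\lieh)$. The only substantive difference in the superfermion setting is that $F^1(\liea)$ has its unique nontrivial $n$-th product at $n=0$ rather than $n=1$ (cf.\ \eqref{fnprod} vs.\ \eqref{nthprodb}), which corresponds to a single pole in the OPE and removes the derivative factor $(m+\N)$ seen in \eqref{waffbrack}.

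For the forward direction, given a $\ph$-twisted $F^1(\liea)$-module $W$, I would define $(at^m)v := a_{(m+\N)}v$ using the modes \eqref{tmodes}. The twisted commutator formula of \cite{Bak:2015} expresses $[a_{(m+\N)}, b_{(n+\N)}]$ as a finite sum over $j\geq 0$ of terms involving the $j$-th products $a_{(j)}b$. Because $a_{(j)}b=0$ for $j\geq 1$ and $a_{(0)}b=(a|b)\vac$ in $F^1(\liea)$, only the $j=0$ term survives. Using the $\si$-invariance and $\N$-skew-symmetry of $(\cdot|\cdot)$ from \eqref{inv}, a short calculation (noting that the invariance gives $(e^{\ze_1\N}a|e^{\ze_2\N}b)=(a|b)$) reduces the bracket to $\delta_{m,-n-1}(a|b)K$, which is exactly \eqref{caffbrack}. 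The restrictedness of $W$ as a $\caph$-module is immediate from the definition of $\LF(W)$, since for each $v\in W$ and each $a\in\liea_\al$, the series $Y(a,z)v$ has only finitely many negative powers of $z$.

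For the converse, given a restricted $\caph$-module $W$, I would define the generating fields $Y(a,z)\in\LF(W)$ for $a\in\liea$ by formula \eqref{tfield}, and then apply the twisted-logarithmic existence/reconstruction theorem of \cite{Bak:2015} to extend $Y$ to a $\ph$-twisted action of the full vertex algebra $F^1(\liea)$ on $W$. This requires checking two properties of the generating fields: the $\ph$-equivariance $Y(\ph a,z)=e^{2\pi\ii D_\ze}Y(a,z)$, which is built into \eqref{tfield}; and the correct mutual locality/OPE $a(z)b(w)\sim (a|b)K/(z-w)$, which must be derived from the brackets \eqref{caffbrack}. The main obstacle is verifying this OPE directly from \eqref{caffbrack}, accounting for the $e^{\ze\N}$ factors in \eqref{tfield}; however, the computation is appreciably simpler than its superboson counterpart because only a single nontrivial $n$-th product contributes.

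Uniqueness of the extension is automatic: any two extensions must agree on $\liea$, and since $F^1(\liea)$ is generated by $\liea$ as a vertex algebra (with the vacuum spanned by iterated $n$-th products applied to $\vac$), they must coincide on all of $F^1(\liea)$.
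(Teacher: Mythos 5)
Your proposal is correct and follows essentially the same route as the paper, which simply states that the proof is identical to that of \cite[Theorem 6.3]{Bak:2015} (with the commutator formula picking out only the $j=0$ term because of \eqref{fnprod}, and the reconstruction theorem handling the converse). Your observation that the shift from the $j=1$ product to the $j=0$ product removes the $(m+\N)$ factor from \eqref{waffbrack} and yields \eqref{caffbrack} is exactly the point of the adaptation.
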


The proof of the theorem is identical to that of \cite[Theorem 6.3]{Bak:2015} and is omitted.
It follows from $\caph = (C_{\liea})_\si$ that every
$\ph$-twisted $F^1(\liea)$-module $W$ has the structure of a $\si$-twisted $F^1(\liea)$-module,
and vice versa. More precisely, if $Y\colon F^1(\liea)\to\LF(W)$ is the state-field correspondence as a $\ph$-twisted module, then the state-field correspondence as a $\si$-twisted module is given by the map $X$ from \eqref{tfieldx} for $a\in\liea$. Conversely, given $X$, we can determine $Y$ from \eqref{tfield} for $a\in\liea$. 
However, the relationship is more complicated for elements $a\in F^1(\liea)$ that are not in the generating set $\liea$. In particular, we will see below that the action of the Virasoro algebra is different, so that $L_0$ is not semisimple in a $\ph$-twisted module while it is semisimple in a $\si$-twisted module.

We will split $\CC$ as a disjoint union of subsets $\CC_{-\frac12}^+$, $\CC_{-\frac12}^-$ and $\{-\frac12\}$ where 
\begin{equation}\label{cplus2}
\CC_{-\frac12}^+ = -\frac12 + \CC^+  , \qquad \CC_{-\frac12}^- = -\frac12 - \CC^+ ,
\end{equation}
and $\CC^+$ is given by \eqref{cplus}.
The $\ph$-twisted Clifford affinization $\caph$ has a triangular decomposition 
\begin{equation}\label{catr}
\caph = \caph^- \oplus \caph^0 \oplus \caph^+,
\end{equation}
 where
\begin{equation*}
(C_\liea)^\pm_\ph = \Span\bigl\{ at^m \,\big|\, a \in \liea_\al, \, \al \in \CC/\ZZ, \, m \in \al \cap \CC_{-\frac12}^\pm \bigr\}
\end{equation*}
and
\begin{equation*}
(C_\liea)_\ph^0 = \Span\bigl\{at^{-\frac12} \,\big|\, a \in \liea_{-\frac12} \bigr\}\oplus \CC K.
\end{equation*}
Starting from a $(C_\liea)_\ph^0$-module $R$ with $K=I$, the (generalized) \emph{Verma module} is defined by
\begin{equation*}
M_\ph(R) = \ind_{(C_\liea)_\ph^+ \oplus (C_\liea)_\ph^0}^{\caph}R ,
\end{equation*}
where $(C_\liea)_\ph^+$ acts trivially on $R$. These are $\ph$-twisted $F^1(\liea)$-modules, and in the following sections we will realize them explicitly as Fock spaces and will determine the action of the Virasoro algebra on them.

\subsection{Action of the Virasoro algebra}\label{subsec:ovir}

Pick bases $\{v_i\}$ and $\{v^i\}$ of $\liea$ such that $p(v_i)=p(v^i)$ and $(v_i|v^j) = \delta_{i,j}$. Then 
\begin{equation}\label{fomega}
\omega = \frac{1}{2}\sum_{i=1}^d v^i_{(-2)} v_i \in F^1(\liea),
\qquad\quad d=\dim\liea,
\end{equation}
is a conformal vector with central charge $c = -\frac{1}{2}\sdim \liea$. 

Let $\mathcal{S}\colon\liea\to \liea$ be the linear operator given by $\mathcal{S} a = \al_0 a$ for $a \in \liea_\al$,
where, as before, $\al_0\in\al$ is such that $-1<\re \al_0 \leq 0$.
In the next result, we use the normally ordered product from \cite[Section 3.3]{Bak:2015}
(cf.\ \seref{sactvir}).

\begin{proposition}\label{prop:fvir}
In every\/ $\ph$-twisted\/ $F^1(\liea)$-module, we have
\begin{align*}
2 Y(\om,z) = \sum_{i=1}^d \nop{&\bigl(\partial_z X(v^i,z)\bigr) X(v_i,z)} 
- z^{-1} \sum_{i=1}^d \nop{X(\N v^i,z) X(v_i,z)} \\
&- z^{-2} \str \binom{\mathcal{S}}{2} I,
\end{align*}
where $\str$ denotes the supertrace.
\end{proposition}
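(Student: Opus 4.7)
The plan is to reduce the computation of $Y(\om,z)$ to that of $X(v^i_{(-2)}v_i, z)$ for each $i$. Since $\om$ is $\ph$-fixed ($\sigma\om = \om$ and $\N\om = 0$, the latter following from $(v^i|v_j) = \delta_{i,j}$ together with the skew-invariance \eqref{inv}), the field $Y(\om,z)$ is independent of $\ze$, so $Y(\om,z) = X(\om,z)$. Thus the proposition amounts to a formula for $X(a_{(-2)}b, z)$ with $a,b\in\liea$, summed over the dual bases.

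The first ingredient is the twisted translation identity
\begin{equation*}
X(Ta,z) = \partial_z X(a,z) - z^{-1} X(\N a, z), \qquad a\in\liea,
\end{equation*}
which I would derive from the translation axiom $Y(Ta,z) = D_z Y(a,z)$ together with \eqref{dzdze} and the observation, read off from \eqref{tfield}, that $\partial_\ze Y(a,z) = -Y(\N a, z)$, by specializing at $\ze = 0$. The second ingredient is the $n=-2$ case of the twisted analog of the Wick formula from \cite{Bak:2015}, of the schematic form
\begin{equation*}
X(a_{(-2)}b, z) = \nop{X(Ta,z)\, X(b,z)} + (\text{scalar from the singular OPE of } X(a,z) X(b,w)).
\end{equation*}
For free superfermions the only nonzero OPE product is $a_{(0)}b = (a|b)\vac$ by \eqref{fnprod}, so the scalar contribution is a multiple of $(a|b) z^{-2}$ weighted by a polynomial in the twist shift $\mathcal{S}$.

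Substituting the translation identity into the normally ordered product yields the first two terms of the proposition. Setting $a = v^i$, $b = v_i$, summing over $i$ with the factor $1/2$ from \eqref{fomega}, and using $(v^i|v_j)=\delta_{i,j}$ consolidates the scalar anomalies into the single term $-z^{-2}\str\binom{\mathcal{S}}{2}I$; the supertrace appears (rather than an ordinary trace) because the form $(\cdot|\cdot)$ is anti-supersymmetric, and rewriting $\sum_i (v^i|\cdot)(\cdot|v_i)$-type expressions in trace form introduces the parity signs.

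The main obstacle will be determining the scalar anomaly with the correct binomial coefficient and sign. Concretely, one must compare the field normal ordering $\nop{\,\cdot\,}$ with the mode-level reordering implicitly present in $a_{(-2)}b$; these differ in the logarithmic setting (as already seen in the proof of \prref{prop:bvirfield}) by a sum over the positive-mode indices $m\in\alpha_0+\ZZ$ that must cross. Evaluating this sum through a standard binomial identity produces the coefficient $\binom{\alpha_0}{2}$ on each $\mathcal{S}$-eigenspace, which then assembles into $\str\binom{\mathcal{S}}{2}$ after summing over the dual bases.
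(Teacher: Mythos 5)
Your proposal follows essentially the same route as the paper's proof: your twisted translation identity is exactly \eqref{diffy}, your Wick-type formula with scalar anomaly is \cite[Lemma 5.8]{Bak:2015} (with $a$ replaced by $Ta$) specialized via the products \eqref{fnprod}, and the supertrace emerges from the anti-supersymmetry of $(\cdot|\cdot)$ just as in the paper's final computation. The one point to watch is that the anomaly operator coming out of the mode-crossing computation is $\binom{\mathcal{S}+\N}{2}$ rather than $\binom{\mathcal{S}}{2}$, but this is harmless since the terms involving the nilpotent $\N$ are supertraceless.
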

\begin{proof}
Recall that in any vertex algebra, $(Ta)_{(j)}b=-ja_{(j-1)}b$, where $T$ is the translation operator (see e.g.\ \cite{K2}).
By replacing $a$ with $Ta$ in \cite[Lemma 5.8]{Bak:2015} and using \cite[(4.3)]{Bak:2015}, we obtain
\begin{align*}
\nop{\bigl(D_z Y(a,z)\bigr) Y(b,z)} 
= -\sum_{j=-1}^{N-1} j z^{-j-1} Y\Bigl( \bigl( \binom{\mathcal{S}+\N}{j+1} a\bigr)_{(j-1)} b,z \Bigr)
\end{align*}
for sufficiently large $N$ (depending on $a,b$). Due to \eqref{fnprod}, when $a,b\in\liea$, the right-hand side reduces to
\begin{equation*}
Y(a_{(-2)} b,z) - z^{-2} \Bigl( \binom{\mathcal{S}+\N}{2} a \Big| b \Bigr) I.
\end{equation*}
Now using \eqref{dzdze} and \eqref{tfield}, we observe that
\begin{equation}\label{diffy}
D_z Y(a,z)\big|_{\ze=0} = \partial_z X(a,z) - z^{-1} X(\N a,z).
\end{equation}
Finally, we note that
\begin{equation*}
\sum^d_{i=1}\bigl(\binom{\mathcal{S}}{2} v^i \big| v_i\bigr) = -\sum_{i=1}^d(-1)^{p(v^i)}\bigl(v_i\big|\binom{\mathcal{S}}{2} v^i\bigr) = -\str \binom{\mathcal{S}}{2}.
\end{equation*}
Then the rest of the proof is as in \cite[Lemma 6.4]{Bak:2015}.
\end{proof}


Now let us assume that $\liea$ can be written as the direct sum of two isotropic subspaces $\liea^-=\Span \{v_i\}$ and $\liea^+=\Span\{v^i\}$ $(1 \leq i \leq \ell)$, where, as before, 
$(v_i|v^j)=\de_{i,j}$ and $d=\dim\liea=2\ell$.
Following \cite[Section 3.6]{K2}, we let
\begin{equation}\label{omla}
\om^\la = (1-\la)\om^+ + \la\om^- \qquad (\la \in \CC),
\end{equation}
where
\begin{equation*}
\om^+ = \sum_{i=1}^\ell v^i_{(-2)}v_i, \qquad 
\om^- = -\sum_{i=1}^\ell (-1)^{p(v_i)} {v_{i}}_{(-2)}v^i.
\end{equation*}
Then $\om^\la$ is a conformal vector in $F^1(\liea)$  with central charge 
\begin{equation*}
c_\la = (6\la^2-6\la+1) \sdim\liea.
\end{equation*}
In particular, $\om^{1/2}$ coincides with \eqref{fomega}.
We denote the corresponding family of Virasoro fields as
\begin{equation*}
L^\la(z) = Y(\om^\la,z) = (1-\la)L^+(z)+\la L^-(z).
\end{equation*}
Their action can be derived from the proof of \prref{prop:fvir} as follows.

\begin{corollary}\label{fvirother}
If\/ $\ph(\om^+)=\om^+$,
then in every\/ $\ph$-twisted\/ $F^1(\liea)$-module
\begin{align*}
L^+(z) = \sum_{i=1}^\ell \nop{&\bigl(\partial_z X(v^i,z)\bigr) X(v_i,z)} 
- z^{-1} \sum_{i=1}^\ell \nop{X(\N v^i,z) X(v_i,z)} \\
&- z^{-2} \str \binom{\mathcal{S^+}}{2} I,
\end{align*}
where\/ $\mathcal{S^+}$ is the restriction of\/ $\mathcal{S}$ to\/ $\liea^+$.
\end{corollary}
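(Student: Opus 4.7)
The plan is to parallel the proof of \prref{prop:fvir}, restricting the sum over basis vectors to those in $\liea^+$. The central identity derived in that proof, valid for any $a, b \in \liea$, is
\begin{equation*}
\nop{(D_z Y(a, z)) Y(b, z)} = Y(a_{(-2)} b, z) - z^{-2} \Bigl(\binom{\mathcal{S}+\N}{2}a\,\Big|\,b\Bigr) I,
\end{equation*}
which follows from \cite[Lemma 5.8]{Bak:2015} together with the $n$-th products \eqref{fnprod} in $F^1(\liea)$. This identity requires only that $a, b \in \liea$, and not that they run over a full self-dual basis of $\liea$, so it is equally applicable with $a$ and $b$ drawn from the two halves of the polarization $\liea = \liea^+ \oplus \liea^-$.

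First, I would specialize to $a = v^i$, $b = v_i$ for $1 \leq i \leq \ell$ and sum. Since $\om^+ = \sum_{i=1}^\ell v^i_{(-2)} v_i$, the right-hand side yields $Y(\om^+, z) = L^+(z)$ plus a multiple of $z^{-2} I$. The hypothesis $\ph(\om^+) = \om^+$ ensures $L^+(z)$ has only integer powers of $z$ and acts as an ordinary untwisted Virasoro field, so the identity can be interpreted mode by mode on any $\ph$-twisted module.

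Next, I would set $\ze = 0$ and apply \eqref{diffy} to obtain $D_z Y(v^i, z)\big|_{\ze=0} = \partial_z X(v^i, z) - z^{-1} X(\N v^i, z)$, together with $Y(v_i, z)\big|_{\ze=0} = X(v_i, z)$ since $v_i \in \liea$. The normally ordered product on the left-hand side then splits exactly into the two $X$-field sums displayed in the corollary. The coefficient of $z^{-2}$ is handled by the same supersymmetric pairing argument as in the proof of \prref{prop:fvir}: $\sum_{i=1}^\ell \bigl(\binom{\mathcal{S}+\N}{2} v^i\,\big|\,v_i\bigr)$ equals the negative supertrace of $\binom{\mathcal{S}+\N}{2}$ restricted to $\liea^+$, and because $\N$ is nilpotent and commutes with the semisimple $\mathcal{S}^+$, every term in $\binom{\mathcal{S}+\N}{2}$ containing an occurrence of $\N$ is nilpotent on each $\mathcal{S}^+$-eigenspace and contributes zero supertrace, reducing the expression to $-\str\binom{\mathcal{S}^+}{2}$.

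The main technical subtlety, and the reason for including the hypothesis $\ph(\om^+)=\om^+$, is that the asymmetric choice of half-basis $\{v^i\} \subset \liea^+$ paired with $\{v_i\} \subset \liea^-$ need not respect the $\sigma$- or $\N$-structure a priori: $\sigma$ and $\N$ can in principle mix $\liea^+$ with $\liea^-$. The condition $\ph(\om^+) = \om^+$ is precisely the algebraic constraint that makes the truncated sum behave as an untwisted Virasoro field and legitimizes identifying $\mathcal{S}^+$ as the restriction of $\mathcal{S}$ to $\liea^+$ when collecting the supertrace term.
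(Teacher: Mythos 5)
Your argument is correct and is essentially the paper's own derivation: the paper gives no separate proof of this corollary, saying only that it "can be derived from the proof of \prref{prop:fvir}," namely by restricting the sum there to $a=v^i$, $b=v_i$ with $1\leq i\leq\ell$, evaluating at $\ze=0$ via \eqref{diffy}, and killing the $\N$-terms in the supertrace by nilpotency, which is exactly what you do. One small caveat: your closing claim that $\ph(\om^+)=\om^+$ is \emph{precisely} what legitimizes restricting $\mathcal{S}$ to $\liea^+$ overstates that hypothesis, whose actual role is to force $\si\om^+=\om^+$ and $\N\om^+=0$ so that $Y(\om^+,z)$ is an honest untwisted Virasoro field; the $\si$- and $\N$-invariance of $\liea^+$ (needed to even speak of $\mathcal{S}^+$) is a separate standing assumption, satisfied in Examples \ref{ex:symm1} and \ref{ex:skew1}.
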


If the automorphism $\ph$ is as in \exref{ex:skew2}, then a short calculation gives
$\N(\om^\la) = (2\la-1) {v_{\ell+1}}_{(-2)}v_{\ell+1}$. This implies that only the modes of $Y(\omega^{1/2},z)=L^{1/2}(z)$ yield an untwisted representation of the Virasoro algebra on a $\ph$-twisted $F^1(\liea)$-module. 
If $\ph$ is as in Examples \ref{ex:symm1} or \ref{ex:skew1}, then $\N(\om^\la)=0$ and $L^\la(z)$ yields an untwisted representation of the Virasoro algebra for any $\la \in \CC$. 

\begin{remark}
In the special case when $\liea$ is even with $\dim\liea=2$ (i.e., when we have a $\be\ga$-system of rank $1$), the above Virasoro fields resemble but are different from
those of \cite[(3.15)]{An}.
\end{remark}

\subsection{Subalgebras of free superfermions}

Suppose that $\dim\liea=2\ell$ as in Examples \ref{ex:symm1}, \ref{ex:skew1}.
It is well-known that the elements 
\begin{equation*}
u_i=v_i \in F^1(\liea), \quad u_{\ell+i}=T v_{\ell+i} \in F^1(\liea) \qquad (1 \leq i \leq \ell)
\end{equation*}
(where $T$ is the translation operator) are generators of the \emph{free superboson algebra} $B^1(\liea)\subset F^1(\liea)$.
This is
the Heisenberg vertex algebra in the case of \exref{ex:symm1}, and the symplectic fermion algebra $SF$ in the case of \exref{ex:skew1} (see e.g.\ \cite{Kau1}). 
Here $u_i$ plays the role of $v_i$ from \seref{ssfm} and from \cite[Section 6.3]{Bak:2015}.

When $\ph$ is the automorphism of $F^1(\liea)$ from Examples \ref{ex:symm1}, \ref{ex:skew1}, then $\ph$ restricts to an automorphism of $B^1(\liea)$
of the same type, since $\ph$ commutes with $T$. Thus, any $\ph$-twisted $F^1(\liea)$-module restricts to a $\ph$-twisted $B^1(\liea)$-module.
In such a module, the logarithmic fields corresponding to the generators are given by (cf.\ \cite[(4.3)]{Bak:2015}):
\begin{equation*}
Y(u_i,z) = Y(v_i,z), \quad Y(u_{\ell+i},z) = D_zY(u_{\ell+i},z) \qquad (1 \leq i \leq \ell). 
\end{equation*}
Then ${u_{i}}_{(m+\N)}={v_{i}}_{(m+\N)}$ for $m \in \al_0+\ZZ$, and using \eqref{diffy} we obtain
\begin{equation*}
{u_{\ell+i}}_{(m+\N)}=-m{v_{\ell+i}}_{(m-1+\N)}+(1-\de_{i,\ell}){v_{\ell+i+1}}_{(m-1+\N)}
\end{equation*}
for $m \in -\al_0+\ZZ$.
The action of these modes on $M_\ph(R)$ is related to the $\ph$-twisted modules constructed in \seref{ssfm} and \cite[Section 6.3]{Bak:2015}
by a linear change of variables.

The free superboson algebra $B^1(\liea)$ has a conformal vector (cf.\ \eqref{sbomega})
\begin{equation*}
\om'= \sum_{i=1}^\ell {u_{2\ell-i+1}}_{(-1)}u_i.
\end{equation*}
Since
\begin{equation*}
{u_{2\ell+1-i}}_{(-1)}u_i=(Tv_{2\ell+1-i})_{(-1)}v_i={v_{2\ell+1-i}}_{(-2)}v_i = v^i_{(-2)}v_i,
\end{equation*}
we have $\om'=\om^+ \in F^1(\liea)$ (see \eqref{omla}).
It follows that the action of $Y(\om',z)$ on $M_\ph(R)$ is equivalent to the action of $L^+(z)$.
The actions of $L(z) = L^{1/2}(z)$ and $L^+(z)$ will be computed explicitly in the following two sections.

Another important subalgebra of $F^1(\liea)$ is the \emph{$\W_{1+\infty}$-algebra} \cite{KR}. It is generated by the following elements similar to $\om^+$:
\begin{equation}\label{nun1}
\nu^n = \sum_{i=1}^\ell v^i_{(-n)}v_i, \qquad n=1,2,3,\dots
\end{equation}
so that $\nu^2=\om^+$ and $\nu^1$ generates the Heisenberg algebra.
The automorphism $\ph$ of $F^1(\liea)$ from Examples \ref{ex:symm1}, \ref{ex:skew1} satisfies $\ph(\nu^n)=\nu^n$ for all $n$.
Therefore, any $\ph$-twisted $F^1(\liea)$-module restricts to an (untwisted) module of $\W_{1+\infty}$. The fields $Y(\nu^n,z)$ in such a module can be computed
as in \prref{prop:fvir} and \coref{fvirother}:
\begin{equation}\label{nun2}
\begin{split}
Y(\nu^n,z) = \frac1{(n-1)!} & \sum_{i=1}^\ell \, \nop{X\bigl( \bigl( \partial_z - z^{-1} \N \bigr)^{n-1} v^i,z\bigr) X(v_i,z)} \\
& +(-1)^{n-1} z^{-n} \str \binom{\mathcal{S^+}}{n} I.
\end{split}
\end{equation}

Other important realizations by free superfermions are those of \emph{classical affine Lie (super)algebras} \cite{F,KP,FF4,KW1}.
Here we discuss just one example.
Let us assume, as before, that $\liea$ can be written as the direct sum of two isotropic subspaces $\liea^-=\Span \{v_i\}$ and $\liea^+=\Span\{v^i\}$ $(1 \leq i \leq \ell)$,
where $(v_i|v^j)=\de_{i,j}$.
We label the basis vectors so that the odd part $(\liea^-)_{\bar1}$ is spanned by $\{v_i\}_{i=1,\dots,m}$, and the even part $(\liea^-)_{\bar0}$ is spanned by $\{v_i\}_{i=m+1,\dots,m+n}$
where $\ell=m+n$. Then, by \cite[Proposition 3.1]{KW1}, the elements $e_{ij} = {v_i}_{(-1)}v^j$ ($1\leq i,j\leq\ell$) provide a realization of the affine Lie superalgebra 
$\widehat{\mathfrak{gl}}(m|n)$ inside the free superfermion algebra $F^1(\liea)$.
It is easy to see that if $\ph$ is the automorphism of $F^1(\liea)$ from Examples \ref{ex:symm1}, \ref{ex:skew1}, then $\ph$ preserves $\widehat{\mathfrak{gl}}(m|n)$.
In fact, $\ph$ acts as an inner automorphism of ${\mathfrak{gl}}(m|n)$; hence, the corresponding $\ph$-twisted modules are as in
\cite[Section 6.1]{Bak:2015}.

\section{Free fermions}\label{sec:ff}
In this section, we will compute explicitly the actions of $\caph$ and $L_0$ on $M_\ph(R)$ when $\liea$ is odd as in Examples \ref{ex:symm1}, \ref{ex:symm2}. Let $\{v_i\}$ be a basis for $\liea$ such that $(v_i|v_j) = \de_{i+j,d+1}$ $(1 \leq i,j \leq d)$, and $\ph$ acts as in Example \ref{ex:symm1} or \ref{ex:symm2}. Then the basis defined by $v^i = v_{d-i+1}$ is dual to $\{v_i\}$ with respect to $(\cdot|\cdot)$, and a conformal vector $\om$ is given by \eqref{fomega}.

 \subsection{The case of {E}xample \ref{ex:symm1}}

Assume that $\dim \liea = 2\ell$, and $\sigma$ and $\N$ act as in Example \ref{ex:symm1}. The logarithmic fields $Y(v_j,z)$ are given by \eqref{tfields1}. The Lie superalgebra $\caph$ is spanned by an even central element $K$ and odd elements $v_it^{m+\al_0}$, $v_{\ell+i}t^{m-\al_0}$ $(1 \leq i \leq \ell, \, m \in \ZZ)$. The nonzero brackets in $\caph$ are given by:
\begin{equation*}
[v_i t^{m+\al_0},v_j t^{n-\al_0}] = \delta_{m,-n-1}\delta_{i+j,2\ell+1}K,
\end{equation*}
for $1 \leq i \leq \ell$, $\ell+1 \leq j \leq 2\ell$, $m,n \in \ZZ$. The elements of $\caph^-$ act as creation operators on $M_\ph(R)$. Throughout the rest of this section, we will represent them as anti-commuting variables as follows:
\begin{equation}\label{xi2}
v_it^{-m+\al_0}=\xi_{i,m}, \qquad v_j t^{-n-\al_0-1}=\xi_{j,n},
\end{equation}
for $v_i \in \liea_\al$, $v_j \in \liea_{-\al}$, and $m\geq 1$, $n\geq 0$. The precise triangular decomposition \eqref{catr} depends on whether $\al_0=-1/2$ or $\al_0 \in \CC_{-\frac{1}{2}}^+$ (cf.\ \eqref{cplus2}). 

Suppose first that $\al_0 \in \CC_{-\frac{1}{2}}^+$. Then $\caph^0 = \CC K$ and $R = \CC$. Thus
\begin{equation}\label{ffverma1}
M_\ph(R) \cong \bigwedge(\xi_{i,m+1},\xi_{\ell+i,m})_{1\leq i \leq \ell,\, m=0,1,2,\ldots}.
\end{equation}
The action of $\caph^+$ on $M_\ph(R)$ is given explicitly by
\begin{equation*}
v_i t^{m+\al_0} = \partial_{\xi_{2\ell-i+1,m}}, \qquad
v_{\ell+i}t^{n-\al_0-1} = \partial_{\xi_{\ell-i+1,n}},
\end{equation*}
where $1 \leq i \leq \ell$, $m\geq 0$, $n \geq 1$.  By Proposition \ref{prop:fvir}, the action of $L_0$ is
\begin{equation}\label{ffL1}
\begin{split}
L_0 =&\sum_{i=1}^\ell\sum_{m =1}^\infty \xi_{i,m}\bigg(\Big(m-\al_0-\frac{1}{2}\Big)\partial_{\xi_{i,m}}-(1-\delta_{i,1})\partial_{\xi_{i-1,m}}\bigg) \\
&\quad +\sum_{i=1}^\ell\sum_{m =0}^\infty  \xi_{\ell+i,m}\bigg(\Big(m+\al_0+\frac{1}{2}\Big)\partial_{\xi_{\ell+i,m}}+(1-\delta_{i,1})\partial_{\xi_{\ell+i-1,m}}\bigg) \\
&\quad +\frac{\ell}{2}\al_0^2I.
\end{split}
\end{equation}
Using \coref{fvirother}, we find that the action of $L_0^+$ is given by:
\begin{equation}\label{ffL1p}
\begin{split}
L_0^+ =&\sum_{i=1}^\ell\sum_{m =1}^\infty \xi_{i,m}\bigg((m-\al_0)\partial_{\xi_{i,m}}-(1-\delta_{i,1})\partial_{\xi_{i-1,m}}\bigg) \\
&\quad +\sum_{i=1}^\ell\sum_{m =0}^\infty  \xi_{\ell+i,m}\bigg((m+\al_0)\partial_{\xi_{\ell+i,m}}+(1-\delta_{i,1})\partial_{\xi_{\ell+i-1,m}}\bigg) \\
&\quad +\frac{\ell}{2}\al_0(\al_0-1)I,
\end{split}
\end{equation}
which corresponds to \eqref{SFL1} after relabeling of the variables. 

Now we consider the case when $\al_0 = -1/2$. Then 
\begin{equation}\label{fftriangle2}
\caph^0 = \Span\{v_it^{-1/2}\}_{1 \leq i \leq d}\oplus \CC K,
\end{equation}
where $d=2\ell$. We let
\begin{equation*}
R = \bigwedge(\xi_{\ell+i,0})_{1 \leq i \leq \ell}
\end{equation*}
with 
\begin{equation*}
v_{\ell+i}t^{-1/2} = \xi_{\ell+i,0}, \qquad v_{i}t^{-1/2} = \partial_{\xi_{2\ell-i+1,0}} \qquad (1 \leq i \leq \ell). 
\end{equation*}
Therefore, by \eqref{xi2}, $M_\ph(R)$ is again given by \eqref{ffverma1}.
The action of $\caph^+$ is given by 
\begin{equation*}
v_it^{m-1/2}  = \partial_{\xi_{2\ell-i+1,m}}, \qquad v_{\ell+i}t^{n-1/2} = \partial_{\xi_{\ell-i+1,n}},
\end{equation*}
for $1 \leq i \leq \ell$, $m \geq 0$, $n \geq 1$.  The actions of $L_0$ and $L_0^+$ are given by \eqref{ffL1} and \eqref{ffL1p} respectively, each with $\al_0=-\frac12$.

 \subsection{The case of {E}xample \ref{ex:symm2}}
Let $\liea$ be as in Example \ref{ex:symm2}. The logarithmic fields $Y(v_j,z)$ are the same as in \cite[Section 6.5]{Bak:2015}:
\begin{equation*}
Y(v_j,z) = \sum_{i=j}^{2\ell-1} \sum_{m\in\al_0+\ZZ} \frac{(-1)^{(i-j)(i+j-1)/2}}{(i-j)!} \, \zeta^{i-j} (v_i t^m) z^{-m-1}
\end{equation*}
for $1\leq j\leq 2\ell-1$.
The Lie superalgebra $\caph$ is spanned by an even central element $K$ and odd elements $v_it^{m+\al_0}$ $(1 \leq i \leq 2\ell-1, \, m \in\ZZ)$ where $\al_0 = -\frac12$ or $0$. The brackets in $\caph$ are given by
\begin{equation*}
[v_it^{m+\alpha_0},v_jt^{n-\alpha_0}] = \de_{m,-n-1}\de_{i+j,2\ell}K,
\end{equation*}
for $1 \leq i, j \leq 2\ell-1$, $m,n \in \ZZ$. 

We let the creation operators from $\caph^-$ act by the first equation of \eqref{xi2}. The triangular decomposition \eqref{catr} depends on whether $\alpha_0 = -\frac12$ or $\alpha_0=0$. We first consider the case when $\al_0 = 0$. Then $\caph^0 = \CC K$ and 
\begin{equation*}
M_\ph(R) \cong \bigwedge(\xi_{i,m})_{1 \leq i \leq 2\ell-1, \, m=1,2,3,\ldots},
\end{equation*}
where the action of $\caph^+$ on $M_\ph(R)$ is 
\begin{equation*}
v_it^{m}  = \partial_{\xi_{2\ell-i,m+1}}, \qquad 1 \leq i \leq 2 \ell - 1, \; m \geq 0.
\end{equation*}
The action of $L_0$ is
\begin{align*}
L_0 & = \sum_{i=1}^{2\ell-1}\sum_{m=1}^\infty\xi_{i,m} \bigg(\Big(m-\frac{1}{2}\Big)\partial_{\xi_{i,m}}+(-1)^{i+1}(1-\delta_{i,1})\partial_{\xi_{i-1,m}}\bigg).
\end{align*}

Now consider the case when $\al_0 = -1/2$. Then $\caph^0$ is given by \eqref{fftriangle2} with $d=2\ell-1$.
We let
\begin{equation*}
R = \bigwedge(\xi_{j,0})_{\ell \leq j \leq 2\ell-1}  \qquad \Bigl(\text{where } \xi_{\ell,0}^2 = \frac12 \Bigr),
\end{equation*}
with 
\begin{equation*}
v_{j}t^{-1/2}=\xi_{j,0}, \qquad v_{i}t^{-1/2}=\partial_{\xi_{2\ell-i,0}}  
\end{equation*}
for $1 \leq i \leq \ell-1$ and $\ell \leq j \leq 2\ell-1$.
Therefore,
\begin{equation*}
M_\ph(R) \cong \bigwedge(\xi_{i,m+1},\xi_{j,m})_{1 \leq i \leq \ell-1, \, \ell\leq j \leq 2\ell-1 ,\, m =0,1,2,\ldots},
\end{equation*}
where the action of $\caph^+$ on $M_\ph(R)$ is given by
\begin{equation*}
v_{i}t^{m-1/2} = \partial_{\xi_{2\ell-i,m}}, \qquad 1 \leq i \leq 2\ell-1, \; m\geq 1.
\end{equation*}
The action of $L_0$ is
\begin{align*}
L_0 =\sum_{i=1}^{2\ell-1} & \sum_{m= 1}^\infty \xi_{i,m}\Big(m\partial_{\xi_{i,m}}-(-1)^i(1-\delta_{i,1})\partial_{\xi_{i-1,m}} \Big)\\
&+\sum_{i=\ell+2}^{2\ell-1}(-1)^{i+1}\xi_{i,0}\partial_{\xi_{i-1,0}}+(-1)^\ell \xi_{\ell+1,0}\xi_{\ell,0} +\frac{2\ell-1}{16}I.
\end{align*}


\section{Bosonic ghost system}\label{sec:bgs}
Now we will compute explicitly the actions of $(C_\liea)_\ph$ and $L_0$ on $M_\ph(R)$ when $\liea$ is even as in Examples \ref{ex:skew1} and \ref{ex:skew2}. 
Let $\{v_i\}_{1 \leq i \leq 2\ell}$ be a basis for $\liea$ such that $(v_i|v_j) = \de_{i+j,2\ell+1}$ $(1 \leq i \leq j \leq 2\ell)$, and $\ph$ acts as in Example \ref{ex:skew1} or \ref{ex:skew2}. Then the basis defined by $v^i = v_{2\ell-i+1}$, $v^{\ell+i} = -v_{\ell-i+1}$ $(1 \leq i \leq \ell)$ is dual to $\{v_i\}$ with respect to $(\cdot|\cdot)$, and a conformal vector is given by \eqref{fomega}.

 \subsection{The case of {E}xample \ref{ex:skew1}}\label{subsec:bg1}
Assume that $\si$ and $\N$ act as in Example \ref{ex:skew1}. The logarithmic fields $Y(v_j,z)$ are given by \eqref{tfields1}. The Lie algebra $\caph$ is spanned by a central element $K$ and elements $v_it^{m+\al_0}$, $v_{\ell+i}t^{m-\al_0}$ $(1 \leq i \leq \ell, \, m \in \ZZ)$. The nonzero brackets in $\caph$ are given by 
\begin{equation}\label{bgbrack1}
[v_it^{m+\al_0},v_jt^{n-\al_0}] = \de_{m,-n-1}\de_{i+j,2\ell+1}K,
\end{equation}
for $1 \leq i \leq \ell$, $\ell+1 \leq j \leq 2 \ell$, $m,n \in \ZZ$. The elements of $\caph^-$ act as creation operators on $M_\ph(R)$. Throughout the rest of this section, we will represent them as commuting variables as follows
\begin{equation}\label{commvar}
v_it^{-m+\al_0}=x_{i,m}, \qquad v_j t^{-n-\al_0-1} = x_{j,n}, 
\end{equation}
for $1 \leq i \leq \ell$, $\ell+1 \leq j \leq 2\ell$
and $m\geq 1$, $n \geq 0$. Again, the precise triangular decomposition \eqref{catr} depends on whether $\al_0 \in \CC_{-\frac{1}{2}}^+$ or $\al_0 = -\frac12$. 

Consider first the case when $\al_0 \in \CC_{-\frac{1}{2}}^+$. Then $\caph^0 = \CC K$, $R=\CC$, and 
\begin{equation}\label{bgverma1}
M_\ph(R) = \CC[x_{i,m+1},x_{\ell+i,m}]_{1 \leq i \leq  \ell ,\, m = 0,1,2,\ldots}.
\end{equation}
The action of $\caph^+$ on $M_\ph(R)$ is given explicitly by 
\begin{equation}\label{bgaction1}
v_it^{m+\al_0}=\partial_{x_{2\ell-i+1,m}},\qquad
 v_{\ell+i}t^{n-\al_0-1}  = -\partial_{x_{\ell-i+1,n}},
\end{equation}
for $1 \leq i \leq \ell$, $m \geq 0$, $n \geq 1$.
The action of $L_0$ is 
\begin{equation}\label{bgL1}
\begin{split}
L_0 =&\sum_{i=1}^\ell\sum_{m =1}^\infty x_{i,m}\bigg(\Big(m-\al_0-\frac{1}{2}\Big)\partial_{x_{i,m}}-(1-\delta_{i,1})\partial_{x_{i-1,m}}\bigg) \\
&+\sum_{i=1}^\ell\sum_{m =0}^\infty  x_{\ell+i,m}\bigg(\Big(m+\al_0+\frac{1}{2}\Big)\partial_{x_{\ell+i,m}}+(1-\delta_{i,1})\partial_{x_{\ell+i-1,m}}\bigg) \\
&-\frac{\ell}{2}\al_0^2I. 
\end{split}
\end{equation}
By \coref{fvirother}, the action of $L_0^+$ is given by:
\begin{equation}\label{bgL1p}
\begin{split}
L_0^+ =&\sum_{i=1}^\ell\sum_{m =1}^\infty x_{i,m}\bigg((m-\al_0)\partial_{x_{i,m}}-(1-\delta_{i,1})\partial_{x_{i-1,m}}\bigg) \\
& +\sum_{i=1}^\ell\sum_{m =0}^\infty  x_{\ell+i,m}\bigg((m+\al_0)\partial_{x_{\ell+i,m}}+(1-\delta_{i,1})\partial_{x_{\ell+i-1,m}}\bigg) \\
& -\frac{\ell}{2}\al_0(\al_0-1)I,
\end{split}
\end{equation}
which agrees with \cite[Lemma 6.8]{Bak:2015} up to relabeling of the variables.

Now consider the case when $\al_0=-\frac{1}{2}$. Then $\caph^0$ is given by \eqref{fftriangle2} with $d=2\ell$.
We let
\begin{equation}\label{bgR2}
R = \CC[x_{\ell+i,0}]_{1 \leq i \leq \ell},
\end{equation}
with 
\begin{equation*}
v_{\ell+i}t^{-1/2} = x_{\ell+i,0}, \qquad v_{i}t^{-1/2}=\partial_{x_{2\ell-i+1,0}} \qquad (1 \leq i \leq \ell).
\end{equation*}
Then $M_\ph(R)$ is given by \eqref{bgverma1},
where the action of $\caph^+$ is given by 
\begin{align}\label{bgaction2}
v_it^{m-1/2}&=\partial_{x_{2\ell-i+1,m}}, &v_{\ell+i}t^{m+1/2}&=-\partial_{x_{\ell-i+1,m+1}},
\end{align}
for $1 \leq i \leq \ell$ and $m\geq 1$. The actions of $L_0$ and $L_0^+$ are given by \eqref{bgL1} and \eqref{bgL1p} respectively, each with $\al_0=-\frac12$.

 \subsection{The case of {E}xample \ref{ex:skew2}}
Let $\liea$ be as in Example \ref{ex:skew2}. The logarithmic fields are given by \eqref{tfields2}. The Lie algebra $\caph$ is spanned by a central element $K$ and elements $v_it^{m+\al_0}$ $(1 \leq i \leq 2 \ell)$ where $\al_0 = 0$ or $-1/2$. The brackets in $\caph$ are given by \eqref{bgbrack1}. 
We let the creation operators from $\caph^-$ act on $M_\ph(R)$ by \eqref{commvar}. As before, the triangular decomposition depends on whether $\al_0=0$ or $-1/2$.

We first consider the case when $\al_0=0$. Then $(C_\liea )_\ph^0 = \CC K$, $R=\CC$, $M_\ph(R)$ is given by \eqref{bgverma1}, and the action of $(C_\liea )_\ph^+$ is given by \eqref{bgaction1} with $\al_0=0$. The action of $L_0$ is 
\begin{align*}
L_0 =&\sum_{i=1}^\ell\sum_{m =1}^\infty x_{i,m}\bigg(\Big(m-\frac{1}{2}\Big)\partial_{x_{i,m}}-(1-\delta_{i,1})\partial_{x_{i-1,m}}\bigg) \\
&\quad +\sum_{i=1}^\ell\sum_{m =1}^\infty  x_{\ell+i,m}\bigg(\Big(m+\frac{1}{2}\Big)\partial_{x_{\ell+i,m}}+(1-2\delta_{i,1})\partial_{x_{\ell+i-1,m}}\bigg)\\
&\quad + \frac{1}{2}\sum_{i=1}^\ell x_{\ell+i,0}\partial_{x_{\ell+i,0}}+ \sum_{i=2}^\ell x_{\ell+i,0}\partial_{x_{\ell+i-1,0}}.
\end{align*}

Now assume $\al_0 = -1/2$. Then $(C_\liea)_\ph^0$, $R$, and  $M_\ph(R)$ are the same as in the case when $\al_0=-1/2$ in Section \ref{subsec:bg1}. The action of $L_0$ is

\begin{align*}
L_0 =\sum_{i=1}^\ell & \sum_{m =1}^\infty x_{i,m}\bigg(m\partial_{x_{i,m}}-(1-\delta_{i,1})\partial_{x_{i-1,m}}\bigg) \\
& +\sum_{i=1}^\ell\sum_{m =1}^\infty  x_{\ell+i,m}\bigg(m\partial_{x_{\ell+i,m}}+(1-2\delta_{i,1})\partial_{x_{\ell+i-1,m}}\bigg)\\
&+\sum_{i=2}^\ell  x_{\ell+i,0} \partial_{x_{\ell+i-1,0}}
+ \frac12 x_{\ell+1,0}^2-\frac{\ell}{8} I. 
\end{align*}


\bibliographystyle{amsalpha}

\end{document}